\documentclass[12pt,reqno,letterpaper]{amsart}
\usepackage{mathrsfs}

\usepackage[T1]{fontenc}
\usepackage{enumitem}
\usepackage{graphicx}
\graphicspath{ {./images/} }
\usepackage{esint}
\usepackage{hyperref}
\hypersetup{
  colorlinks   = true,
  citecolor    = blue,
  linkcolor    = blue
}

\usepackage{amsmath,amsfonts,amsthm,color,tikz, comment, xcolor, xfrac,amssymb}
\usepackage{mathtools}

\usepackage[normalem]{ulem}
\usepackage{mathrsfs}

\usepackage{pdfsync}
\usepackage[font={scriptsize}]{caption}
\usepackage{environ}

\usepackage[left=1in, right=1in, top=1in,bottom=1in]{geometry}
\numberwithin{equation}{section}

\def\XXint#1#2#3{{\setbox0=\hbox{$#1{#2#3}{\int}$ }
\vcenter{\hbox{$#2#3$ }}\kern-.6\wd0}}

\usepackage{fix-cm}

\newcommand{\1}{{\bf 1}}

\newcommand{\R}{\mathbb R}
\newcommand{\N}{\mathbb N}

\newcommand{\T}{\mathbb T}
\newcommand{\Z}{\mathbb Z}
\renewcommand{\P}{\mathbb{P}}

\newtheorem{theorem}{Theorem}[section]

\newtheorem{definition}[theorem]{Definition}

\newtheorem{lemma}[theorem]{Lemma}

\newtheorem{proposition}[theorem]{Proposition}

\theoremstyle{remark}
\newtheorem{remark}[theorem]{Remark}

\theoremstyle{remark}

\newcommand{\bean}{\begin{eqnarray*}}
\newcommand{\eean}{\end{eqnarray*}}
\newcommand{\ben}{\begin{enumerate}}
\newcommand{\een}{\end{enumerate}}
\newcommand{\beq}{\begin{equation}}
\newcommand{\eeq}{\end{equation}}

\allowdisplaybreaks

\begin{document}

\author{ Mandon Pathak}

\title{Nontrivial Weak Solutions of the Stationary KdV equation in Sharp $L^p$ Spaces}

\begin{abstract}
    In this paper we utilize a convex integration scheme to construct non-trivial solutions to the stationary KdV equation which lie in $L^p(\T)$, $p < 2$. In addition, we demonstrate this result is sharp in the sense that if $u \in L^2(\T)$ is a weak solution then $u \in C^\infty(\T)$.
\end{abstract}

\maketitle

\section{Introduction}

\subsection{Motivation and Background}

The stationary Korteweg--de Vries (KdV) equation
\begin{equation}\label{eq:stat_eqn}
6uu' - u^{(3)} = 0
\end{equation}
for $u:\T \to \R$ arises as the time-independent counterpart of the classical KdV equation, which models nonlinear dispersive phenomena such as shallow water waves and plasma oscillations. The KdV equation is a completely integrable system with a rich mathematical structure, including soliton solutions and an infinite hierarchy of conserved quantities, making it a central object in both applied and theoretical analysis of nonlinear dispersive partial differential equations.

Recent years have seen many exciting developments in the theory surrounding the KdV equation, mostly concerning questions of wellposedness. To give a very brief overview, in \cite{Molinet2011} and \cite{Molinet2012} Molinet shows that KdV is ill-posed for initial data lying in $H^{-s}(\T)$ and $H^{-s}(\R)$ for $s > 1$. Kappeler and Topalov \cite{KT} then showed that KdV is wellposed in $H^{-1}(\T)$, and finally in \cite{KV} Killip and Vi\c{s}an established the same threshold for KdV posed on the real line. We also mention the work of Christ \cite{Christ1} where he constructs a nonzero solution to KdV which has zero initial data and lies in $C_tH^{-\epsilon}_x([0,T] \times \T)$, $\epsilon > 0$. This does not conflict with the result of \cite{KT} since Christ's notion of solution is weaker than what is considered in \cite{KT}, \cite{KV}, \cite{Molinet2011}, and \cite{Molinet2012}. This demonstrates the important role that low regularity solutions play in the study of KdV. In contrast, low regularity solutions of stationary KdV have to this point remained largely unexplored.

In this work, we investigate weak solutions of the stationary KdV equation in $L^p(\mathbb{T})$ for $p<2$. Our main result establishes a sharp dichotomy: 
\begin{itemize}
    \item for $p<2$, there exist nontrivial weak solutions that are not smooth, and 
    \item for $p=2$, every weak solution is necessarily smooth (see \cite{DN}, \cite{FN2016}, and \cite{GS} for examples of the construction of such smooth solutions).
\end{itemize}
This result identifies the precise threshold in $L^p$ regularity separating smooth from potentially singular solutions and demonstrates that stationary KdV admits a rich set of low-regularity solutions.

One technicality is how exactly to define the notion of a weak solution when $u \in L^p(\T)$, $p < 2$. Since the nonlinearity in ~\eqref{eq:stat_eqn} is quadratic, then in order for the classical notion of a weak solution to be well defined, this would require that $u \in L^2(\T)$. But from Theorem \ref{thm:main_rig}, this would immediately imply that $u$ is smooth. At this low regularity, the notion of a weak solution for non-stationary KdV used in for instance \cite{KT} and \cite{KV} depends crucially on there being a dependence on the time parameter, and thus seems ill suited for the stationary setting. So in order to overcome this difficulty, we take inspiration from \cite{ABGN}\footnote{In \cite{LR}, Lemari\'{e}-Rieusset proposes an alternative definition for the non-linearity $u\otimes u$ in the case when $u \in L^p(\T^2)$, $p < 2$, which differs slightly from what is offered in \cite{ABGN}.} where the notion of a weak paraproduct solution\footnote{In \cite{CH} these are referred to as \textit{singular solutions}.} to the 2D stationary Navier-Stokes equation is introduced (see Definition \ref{def:weak_para_soln}). The two rigidity results in this paper and for 2D stationary Navier-Stokes should be compared. For 2D stationary Navier-Stokes, it is known that if $u \in L^p(\T^2)$, $p > 2$, is a weak solution with zero mean, then $u = 0$ \cite{ABGN, LR}. On the other hand, Theorem \ref{thm:main_rig} gives a sharp result which also includes the endpoint $p=2$. The downside is for stationary KdV we cannot conclude that smooth mean zero solutions must be identically $0$. This result should also be compared to \cite[Lemma 1]{FN2016} where it is shown that if $u \in C^3(\T)$ is a solution of stationary KdV then $u$ is smooth.

The construction of these solutions relies on a \emph{convex integration }framework, first introduced by De Lellis and Sz\'{e}kelyhidi in \cite{DLS09} and \cite{DLS13} in the context of fluid equations.
Beyond establishing the existence of such solutions for the stationary KdV equation, this approach \emph{opens a new avenue for applying convex integration techniques to KdV-type equations and more general nonlinear dispersive PDEs}, potentially yielding further examples of low-regularity phenomena in integrable and non-integrable contexts. Previously, the method of convex integration was not able to be applied to the KdV equation or more general dispersive PDEs due to the loss of derivatives one would incur in what we call the \textit{dispersion error}:
$$
E_D = -w_{q+1}''.
$$
See Section \ref{section3} for a more detailed discussion of how this term arises. For the purposes of this discussion, let us assume that $w_{q+1}$ is frequency localized to scale $\lambda_{q+1} \gg 1$. In standard convex integration schemes, one is required to estimate this error either in an $L^p$ norm or a $C^\alpha$ norm \cite{BV2020}. Let us again assume we are using an $L^p$ norm and thus we have
\begin{equation}\label{eq:dis_error_est_intro}
    \Vert E_D \Vert_{L^p} \lesssim \lambda_{q+1}^2 \Vert w_{q+1} \Vert_{L^p}.
\end{equation}
In order for the convex integration scheme to close, we require that $\Vert E_D \Vert_{L^p}$ can be made small. In order for this to occur, ~\eqref{eq:dis_error_est_intro} implies that $\Vert w_{q+1} \Vert_{L^p}$ must be made small, in particular, much smaller than $\lambda_{q+1}^2$. The standard tool to achieve this is to utilize an \textit{intermittent building block}. Intermittency has played a key role in many convex integration constructions; see for instance  \cite{ABGN}, \cite{BBV}, \cite{BCV}, \cite{BMNV}, \cite{BV19}, \cite{CH}, \cite{CL2}, \cite{CheskidovLuo}, \cite{DS17}, \cite{Gismondi}, \cite{GR}, \cite{Luo}, \cite{MS}, \cite{NV}, \cite{Peng} and references therein. Unfortunately, the maximum amount of intermittency possible in one dimension only gives the rough estimate
\begin{equation}\label{eq:intermittent_est}
    \Vert w_{q+1} \Vert_{L^p} \lesssim \lambda_{q+1}^{\frac{1}{2} - \frac{1}{p}}.
\end{equation}
Combining ~\eqref{eq:dis_error_est_intro} and ~\eqref{eq:intermittent_est} shows there is no value of $p \geq 1$ which makes the error small for $\lambda_{q+1} \gg 1$. This is the same reason the scheme employed by Luo in \cite{Luo} requires the dimension to be at least $4$.

In order to sidestep this issue, we measure the error in a homogeneous Sobolev norm with a negative regularity index as was done in \cite{ABGN}, \cite{CH}, \cite{Gismondi}, and \cite{GR}. By choosing $s > 0$ large enough we have the estimate
\begin{equation*}
    \Vert E_D \Vert_{\dot{H}^{-s}} \lesssim \Vert w_{q+1} \Vert_{\dot{H}^{-s+2}} \lesssim \Vert w_{q+1} \Vert_{L^1} \lesssim \lambda_{q+1}^{-1/2}
\end{equation*}
which clearly is small for $\lambda_{q+1} \gg 1$. Morally this is the procedure we use, except we utilize a very small amount of intermittency in order to achieve the best possible Besov regularity. See Section \ref{section5} for more details.

These findings contribute to the broader understanding of solution regularity in nonlinear dispersive equations and illustrate the effectiveness of convex integration as a tool for constructing weak solutions with controlled $L^p$ properties. They also suggest a new direction for studying the interplay between integrability, dispersion, and low-regularity behavior in classical PDEs.

\subsection{Main Result}

Our first task is to extend the definition of a weak solution to ~\eqref{eq:stat_eqn} to $u$ lying in some Sobolev space with strictly negative regularity. To do this, we need to make sense of $u^2$ for $u$ potentially an arbitrary distribution. In general, there is not much that can be said about this product when both functions lie in a Sobolev space with negative regularity, but following \cite[Definition 1.1]{ABGN} we may offer the following definition of the mean free product:

\begin{definition}[\textbf{Paraproducts in $\dot H^s(\T)$}]\label{def:paras}
    Let $f,g$ be distributions, so that $\mathbb{P}_{2^j}(f), \mathbb{P}_{2^{j'}}(g)$ are well-defined for $j,j'\geq 0$.  We say that $\mathbb{P}_{\not = 0}(fg)$ is well-defined as a paraproduct in $\dot{H}^s(\T)$ for some $s \in \R$ if
    $$
        \sum_{j,j' \geq 0} \left\Vert \mathbb{P}_{\not = 0}\left(\mathbb{P}_{2^j}(f) \mathbb{P}_{2^{j'}}(g)\right) \right\Vert_{\dot{H}^s} < \infty \, .
    $$
    Then we define
    $$
        \mathbb{P}_{\not = 0}(fg) = \sum_{j,j' \geq 0} \mathbb{P}_{\not = 0} \left(\mathbb{P}_{2^j}(f) \mathbb{P}_{2^{j'}}(g)\right) \, ,
    $$
    since the right-hand side is an absolutely summable series in $\dot H^s(\T)$.
\end{definition}

With this definition, adapting \cite[Definition 1.2]{ABGN}, we may now define a weak solution to \eqref{eq:stat_eqn} which is valid for $u$ belonging to a Sobolev space of arbitrary regularity.

\begin{definition}[\textbf{Weak paraproduct solutions to ~\eqref{eq:stat_eqn}}] \label{def:weak_para_soln}
    If $u \in \dot{H}^s$, $s < 0$, we say $u$ is a weak paraproduct solution to the stationary KdV equation if there is $s' \in \R$ such that $\mathbb{P}_{\not = 0}(u^2)$ is well defined as a paraproduct in $\dot{H}^{s'}$ in the sense of the previous definition and
    $$
        -\left\langle u, \phi^{(3)} \right\rangle_{\dot{H}^{s},\dot{H}^{-s}} + 3\left\langle \mathbb{P}_{\not= 0}(u^2), \phi' \right\rangle_{\dot{H}^{s'}, \dot{H}^{-s'}} = 0
    $$
    for all smooth $\phi$.
\end{definition}

With this, we may state our main results.

\begin{theorem}[\textbf{Rigidity Result}]\label{thm:main_rig}
    If $u \in L^2(\T)$ solves ~\eqref{eq:stat_eqn} then $u \in C^\infty(\T)$.
\end{theorem}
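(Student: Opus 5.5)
Since $u\in L^2(\T)$, the product $u^2$ lies in $L^1(\T)$. The equation $6uu'-u^{(3)}=0$ in the weak sense says that $3(u^2)'=u'''$ as distributions, or equivalently
\[
\bigl(u''-3u^2\bigr)'=0 \quad\text{in } \mathcal{D}'(\T).
\]
The plan is a bootstrap through this integrated equation. The first step is to note that a distribution on $\T$ with vanishing derivative is constant. Hence there is $c\in\R$ with
\[
u''=3u^2+c \quad\text{in } \mathcal{D}'(\T).
\]
For the paraproduct formulation of Definition \ref{def:weak_para_soln} one would first check that, when $u\in L^2$, $\mathbb{P}_{\not=0}(u^2)$ coincides with the mean-free part of the honest $L^1$ function $u^2$. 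This holds because the Littlewood--Paley pieces converge in $L^2$, so the products converge in $L^1$, hence in every $\dot H^{s'}$ with $s'<-1/2$. Either way we reach the identity above, with $c$ possibly replaced by $-3\,\overline{u^2}$ plus a constant. Pairing with the constant test function also shows that the mean of the right-hand side must vanish: $c=-3\int u^2$.

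The second step is the first gain of regularity. Since $3u^2+c\in L^1(\T)$, and $L^1(\T)$ embeds into the space of measures, we get $u''$ a finite measure. Its zeroth Fourier mode vanishes, so $u'$ (normalized) is a function of bounded variation. Hence $u'\in L^\infty$ and $u$ is Lipschitz. Concretely, $u-\bar u$ is obtained by applying the Fourier multiplier $-|k|^{-2}$ to an $L^1$ function, and $\sum_{k\ne0}|k|^{-2}<\infty$ together with $|\widehat{u^2}(k)|\le\|u\|_{L^2}^2$ gives absolute convergence of the Fourier series of $u$ and of $u'$ up to $\ell^\infty$ bounds. The cleanest way to write this is: $u(x)=\bar u+\int_\T G(x-y)\,(3u^2+c)(y)\,dy$, where $G$ is the periodic Green's function of $\partial_x^2$ on mean-zero functions. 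This $G$ is Lipschitz, in fact piecewise quadratic. So $u\in W^{1,\infty}$, in particular $u\in C^{0}(\T)$.

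The third step is the induction. If $u\in C^k(\T)$ then $3u^2+c\in C^k$, so $u''\in C^k$ and $u\in C^{k+2}$. Starting from $u\in C^0$, this gives $u\in C^\infty$. The distributional identity upgrades to a classical one at every stage because we are simply identifying $u$ with $\bar u+G*(3u^2+c)$ almost everywhere, and the right-hand side is smooth.

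The only step requiring care is the first: justifying that the weak formulation gives $u''=3u^2+c$ with $u^2$ the genuine pointwise square. In the paraproduct setting, this means checking that the absolutely summable dyadic decomposition of Definition \ref{def:paras} recovers $\mathbb{P}_{\not=0}(u^2)$ for $L^2$ data, and that this object can be paired against $\phi'$. After that the argument is a routine elliptic-type bootstrap in one dimension. Note that $L^2$ is exactly what makes $u^2$ a function, so the method naturally stops at $p=2$.
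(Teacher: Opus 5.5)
Your proposal is correct and takes essentially the same route as the paper: integrate the distributional equation once to get $u''=3u^2+C$ with right-hand side in $L^1(\T)$, extract an initial gain of regularity, and bootstrap. The differences are cosmetic. The paper gets $u\in W^{2,1}(\T)\subset C^1(\T)$ directly and then bootstraps from $C^3$ (or appeals to a known $C^3\Rightarrow C^\infty$ lemma), whereas you use the periodic Green's function to get $u$ Lipschitz and run the $C^k\Rightarrow C^{k+2}$ induction yourself; your paraproduct discussion is extra, since the paper's proof works with the classical weak form where $u^2\in L^1$.
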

\begin{proof}
Suppose $u \in L^2(\T)$ satisfies
$$
-\int_{\T} u \psi^{(3)} + 3\int_{\T} u^2 \psi' = 0
$$
for all $\psi \in C^\infty(\T)$. In the sense of distributions we have
$$
\left\langle \left(u'' - 3u^2\right)', \psi \right\rangle = \left\langle u^{(3)} - 3(u^2)', \psi \right\rangle = 0.
$$
And so, in the sense of distributions we have $u'' - 3u^2 = C$ for some $C \in \R$. Since $u \in L^2(\T)$, this implies $u \in W^{2,1}(\T)$. From the Sobolev embedding $W^{2,1}(\T) \subset C^1(\T)$ we conclude $u \in C^1(\T)$, and thus $u \in C^3(\T)$. From here we may either appeal to bootstrapping or \cite[Lemma 1]{FN2016} to deduce $u \in C^\infty(\T)$.
\end{proof}

\begin{theorem}[\textbf{Flexibility Result}]\label{thm:main_flex}
    There is
    $$
    u \in \bigcap_{0 < \epsilon < 1} \left(\dot{B}_{\infty,\infty}^{-\epsilon}(\T) \cap L^{2-\epsilon}(\T)\right) \setminus L^2(\T)
    $$
    such that
    \begin{enumerate}
        \item [(a)] $\mathbb{P}_{\not=0}(u^2)$ exists as a paraproduct in $\dot{H}^{-s}(\T)$ for all $s$ large enough;
        \item [(b)] $u$ solves ~\eqref{eq:stat_eqn} in the sense of Definition \ref{def:weak_para_soln}.
    \end{enumerate}
\end{theorem}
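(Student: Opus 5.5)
We construct $u$ as the limit of a convex integration scheme in which the error is measured in a negative Sobolev norm. First rewrite the equation in divergence form. Integrating once, \eqref{eq:stat_eqn} becomes $(3\mathbb{P}_{\neq0}(u^2) - u'')' = 0$. We therefore consider the relaxed system
\begin{equation*}
3\mathbb{P}_{\neq 0}(u_q^2) - u_q'' = R_q ,
\end{equation*}
with $R_q$ mean zero and $u_q$ smooth, mean zero and frequency localized. The aim is $\|R_q\|_{\dot H^{-s}} \to 0$ for some fixed large $s$.

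\textbf{Inductive step.} Given $(u_q,R_q)$, set $u_{q+1} = u_q + w_{q+1}$. The perturbation is $w_{q+1} = a_{q+1}(x)\,W_{\lambda_{q+1}}(x)$, where $W_\lambda$ is a mildly intermittent one-dimensional building block: a periodized bump concentrated on intervals of length $\lambda^{-1}\mu$, oscillating at frequency $\lambda$, normalized so that $\mathbb{P}_{\neq0}(W_\lambda^2)$ has mean one and the remaining part oscillates at high frequency. The amplitude is chosen as $a_{q+1}^2 = -\tfrac13(R_q + c_q)$ after mollification, with a constant $c_q$ making the right-hand side positive. This gives $3\mathbb{P}_{\neq0}(a^2) = -R_\ell$ up to the mean.

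\textbf{New error.} The new error splits into three pieces. The oscillation error is $3a^2(W^2-1)$ projected to nonzero modes. The dispersion error is $-w_{q+1}''$. The remaining pieces are the cross terms $6\mathbb{P}_{\neq0}(u_q w_{q+1})$ and the mollification error. Each is estimated in $\dot H^{-s}$:
\begin{itemize}
\item The oscillation error gains $\lambda_{q+1}^{-s}$ times derivatives of $a$, since it lives at frequency at least $\lambda_{q+1}/\mu$.
\item For the dispersion error we use $\|w''\|_{\dot H^{-s}} \lesssim \|w\|_{\dot H^{2-s}} \lesssim \lambda_{q+1}^{2-s}\|w\|_{L^2}$. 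This is small once $s>2$ plus the intermittency loss, because $w$ is concentrated at frequencies $\sim\lambda_{q+1}$.
\item The cross terms are also at frequency $\sim\lambda_{q+1}$ and are small for the same reason.
\end{itemize}
Choosing $\lambda_{q+1}$ super-exponentially larger than $\lambda_q$ and $\|R_q\|$ closes the induction.

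\textbf{Norms of the limit.} We have $\|w_{q+1}\|_{L^2} \sim \|R_q\|_{L^1}^{1/2}$, which we keep of order one. Normalizing the $L^2$ increments to be of size $\sim 1$ (for instance by adding $c_q \sim 1$ each step, so that the $\dot H^{-s}$ error is still killed) forces $u\notin L^2$. We still get $\|w_{q+1}\|_{L^{p}} \lesssim \mu^{1/2-1/p}$ for $p<2$, so choosing the concentration parameters $\mu_{q+1}\to\infty$ slowly makes $\sum\|w_q\|_{L^{2-\epsilon}}<\infty$ for every $\epsilon$. In the same way, $\|w_{q+1}\|_{L^\infty}\lesssim \mu_{q+1}^{1/2}$ with frequency $\lambda_{q+1}$ gives $\|\mathbb{P}_{2^j}w_{q+1}\|_{L^\infty}\lesssim \mu^{1/2}\lesssim \lambda_{q+1}^{\epsilon}$. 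Summing over $q$ then gives $u\in\dot B^{-\epsilon}_{\infty,\infty}$.

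\textbf{Paraproduct and weak solution.} For (a), the diagonal blocks $\mathbb{P}_{2^j}(u)\mathbb{P}_{2^{j'}}(u)$ have $L^1$ norms bounded by $\|u\|_{L^{2-\epsilon}}$-type quantities times $2^{\epsilon(j+j')}$, via the Besov bound. Since $L^1\subset \dot H^{-s}$ for $s>1/2$ and we have frequency localization, summability in $\dot H^{-s}$ for large $s$ follows. One should check that $\mathbb{P}_{\neq0}(u^2)=\lim \mathbb{P}_{\neq0}(u_q^2)$ in $\dot H^{-s}$, using that the blocks of $u_q$ agree with those of $u$ at low frequency. For (b), passing to the limit in the relaxed equation tested against $\phi'$ gives Definition \ref{def:weak_para_soln}, since $R_q\to0$ in $\dot H^{-s}$.

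\textbf{Main obstacle.} The hardest step is the bookkeeping. The $L^2$ norm must stay of order one (so that $u\notin L^2$), while the dispersion error and the $\dot H^{-s}$ convergence of the paraproduct still close with the only mild intermittency available in one dimension. Making the summability estimate in (a) uniform, where the $L^2$ mass sits at ever higher frequencies, needs the most care.
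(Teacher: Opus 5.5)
There is a genuine gap in part (a), and therefore also in (b). Your scheme is essentially the paper's: error measured in $\dot H^{-s}$, a mildly intermittent 1D block modulated to high frequency, and $a^2$ cancelling the previous error up to a constant. The dispersion, cross and oscillation estimates are fine in substance.

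\textbf{Why your argument for (a) fails.} You bound the blocks $\mathbb{P}_{\neq0}(\mathbb{P}_{2^j}u\,\mathbb{P}_{2^{j'}}u)$ using $L^1\subset\dot H^{-s}$ and frequency localization. This works for $|j-j'|\ge2$, where the product lives at frequency $\sim 2^{\max(j,j')}$. It cannot work for $|j-j'|\le1$, for two reasons.
\begin{itemize}
\item The product's Fourier support reaches down to $|\xi|=1$, so $\dot H^{-s}$ gives no gain.
\item Its $L^1$ norm is $\|\mathbb{P}_{2^j}u\,\mathbb{P}_{2^j}u\|_{L^1}=\|\mathbb{P}_{2^j}u\|_{L^2}^2$, and
\[
\sum_j\|\mathbb{P}_{2^j}u\|_{L^2}^2\gtrsim\|\mathbb{P}_{\neq0}u\|_{L^2}^2=\infty
\]
precisely because you arranged $u\notin L^2$.
\end{itemize}
Your Besov factor $2^{2\epsilon j}$ only makes this worse. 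So no bound through $L^1$ norms of the diagonal blocks can give the absolute summability that Definition~\ref{def:paras} requires. Without (a), the limit $\langle 3\mathbb{P}_{\neq0}(u_q^2),\phi'\rangle\to\langle 3\mathbb{P}_{\neq0}(u^2),\phi'\rangle$ used in (b) is also unjustified.

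\textbf{The missing idea.} The diagonal terms are small only because of the cancellation built into the scheme. In your notation,
\[
3\mathbb{P}_{\neq0}(w_{q+1}^2)=-R_\ell+3\mathbb{P}_{\neq0}\bigl(a_{q+1}^2(W^2-1)\bigr),
\]
hence
\[
\|\mathbb{P}_{\neq0}(w_{q+1}^2)\|_{\dot H^{-s}}\le\tfrac13\|R_q\|_{\dot H^{-s}}+\|a_{q+1}^2(W^2-1)\|_{\dot H^{-s}}.
\]
This is summable because the errors decay geometrically. It is exactly how the paper proves the second half of item~\ref{i:6} of Proposition~\ref{prop:ind}: it bounds $\|w_{q+1}^2\|_{\dot H^{-s}}$ by $\frac13\bigl(\|E_q+3w_{q+1}^2\|_{\dot H^{-s}}+\|E_q\|_{\dot H^{-s}}\bigr)$.

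\textbf{Localization to single shells.} To turn this into blockwise summability, the paper also puts each increment in exactly one Littlewood--Paley shell (item~\ref{i:5}). It does this with the cutoff $\mathbb{P}_{\le\lambda_{q+1}^2}$ and modulation by $\cos(2\pi\sigma_{q+1}x)$, $\sigma_{q+1}=\frac54\lambda_{q+1}^3$. Then the blocks of $u$ are exactly the increments, and those of $u_q$ agree with those of $u$ at low frequency; your limit argument in (b) also relies on this. Your $a_{q+1}W_{\lambda_{q+1}}$ is not band-limited. You therefore need either such a localization or a check that the cancellation survives when one increment is split across neighbouring blocks.

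\textbf{A smaller, repairable point.} Positivity of $a_{q+1}^2$ forces $|c_q|\ge\|R_q\|_{L^\infty}$. Because of the term $-w_q''$, this is at least of order $\lambda_q^2$. So $\|w_{q+1}\|_{L^2}$ cannot be kept of order one, and $\mu_{q+1}$ must be chosen large relative to all stage-$q$ quantities rather than ``slowly''. The paper handles the analogous growth of $a_q=(2\|E_q\|_{L^\infty}-E_q)^{1/2}$ by taking $\lambda_{q+1}$ large.
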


\begin{remark}
    In Theorem \ref{thm:main_flex} it suffices to take $s > 5/2$, but this exact value is unimportant for our purposes.
\end{remark}

The rest of this paper is concerned with the proof of Theorem \ref{thm:main_flex}.

\subsection{Outline of Paper}
In Section \ref{Section2} we review some material related to Littlewood-Paley theory and the function spaces we will be considering in this paper. We also introduce the intermittent building block we utilize in the construction. In Section \ref{section3} we give a brief overview of the convex integration scheme to follow. In Section \ref{Section4} we state the main inductive proposition (Proposition \ref{prop:ind}) and use it to prove Theorem \ref{thm:main_flex}. We then provide the definition of our highly oscillatory perturbation and use it to prove Proposition \ref{prop:ind} in Section \ref{section5}.

\subsection{Acknowledgments} The author is thankful for the discussion with Nicholas Gismondi.

\section{Background Theory and Technical Lemmas}\label{Section2}

The following on Littlewood-Paley theory, Fourier multiplier operators, and function spaces can be found in \cite{BCD} and \cite{Grafakos}. The exact formulation of Definition \ref{def:projs} is based off \cite[Definition 2.1]{ABGN}.

\begin{definition}[\textbf{Littlewood-Paley projectors}]\label{def:projs}
    There exists $\varphi \colon \R\to[0,1]$, smooth, radially symmetric, and compactly supported in $\{\xi : 6/7 \leq |\xi|\leq 2\}$ such that $\varphi(\xi) = 1$ on $\{\xi : 1 \leq |\xi| \leq 12/7\}$,
    \begin{equation}
        \sum_{j\geq 0}\varphi(2^{-j}\xi)=1 \hspace{0.25cm} \text{ for all } \hspace{0.25cm} |\xi|\geq 1, \notag
    \end{equation}
    and $\operatorname{supp}\varphi_{j}\cap \operatorname{supp} \varphi_{j'}=\emptyset$ for all $|j-j'|\geq 2$, where $\varphi_j(\cdot) = \varphi(2^{-j}\cdot)$. We define the projection of a function $f$ on its $0$-mode by
    \begin{equation}
        \mathbb{P}_{=0}(f)=\int_{\T} f(x)\, dx, \notag
    \end{equation}
    and the projection on the $j^{\rm th}$ shell by
    \begin{equation}
        \mathbb{P}_{2^j}(f)(x)=\sum_{\xi\in \Z}\hat{f}(\xi)\varphi_{j}(\xi)e^{2\pi i\xi x} \, . \notag
    \end{equation}
    We also define the frequency cutoff by
    $$
    \mathbb{P}_{\leq 2^j}(f) = \mathbb{P}_{=0}(f) + \sum_{k=1}^j \mathbb{P}_{2^k}(f),
    $$
    the projection onto high frequencies by $\mathbb{P}_{>2^j}(f) = (\operatorname{Id} - \mathbb{P}_{\leq 2^j})(f)$ and the projection off the mean by $\mathbb{P}_{\neq 0}f:=(\operatorname{Id}-\mathbb{P}_{=0})f$. 
\end{definition}

The following lemma is standard, and can be deduced as a consequence of the Poisson summation formula.

\begin{lemma}[\textbf{$L^p$ boundedness of projection operators}]\label{lem:proj}
    $\mathbb{P}_{\leq \lambda}$ is a bounded operator from $L^p$ to $L^p$ for $1 \leq p \leq \infty$ with operator norm independent of $\lambda$.
\end{lemma}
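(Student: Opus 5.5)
The plan is to write $\mathbb{P}_{\leq \lambda}$ as convolution with a kernel $K_\lambda$ on $\T$ and show that $\sup_\lambda \|K_\lambda\|_{L^1(\T)} < \infty$. Young's inequality then gives $\|\mathbb{P}_{\leq\lambda} f\|_{L^p} \le \|K_\lambda\|_{L^1}\|f\|_{L^p}$ for all $1\le p\le\infty$, with a bound independent of $\lambda$. Concretely, $\mathbb{P}_{\leq 2^j}$ has Fourier multiplier $m_j(\xi) = \mathbb{1}_{\xi=0} + \sum_{k=1}^j \varphi(2^{-k}\xi)$. By the partition of unity property in Definition \ref{def:projs}, this equals $\psi(2^{-j}\xi)$ at every integer $\xi$, for a single fixed smooth compactly supported function $\psi:\R\to[0,1]$. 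Take $\psi = \chi + \sum_{k\ge 1}\varphi(2^{-k}\cdot)$ restricted appropriately: $\psi$ equals $1$ near the origin, equals $1-\varphi(2^{-j-1}\cdot)-\dots$ on the transition region, and vanishes for $|\xi|\ge 2^{j+1}$. Adjusting the value at the origin and on $(0,1)$, where no nonzero integers lie, does no harm because only integer frequencies matter.

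First I would check that $m_j(\xi) = \Psi(2^{-j}\xi)$ on $\Z$ for some fixed Schwartz function $\Psi$. For $|\xi|\ge1$ we have $\sum_{k=1}^{j}\varphi(2^{-k}\xi) = 1 - \sum_{k> j}\varphi(2^{-k}\xi) - \varphi(\xi)$, and the $\varphi(\xi)$ term vanishes on nonzero integers except $|\xi|=1$, where $\varphi=1$. That would make $m_j(\pm1)=0$ when the sum starts at $k=1$. Since the definition puts $k=0$ into the $j=0$ shell, this is a bookkeeping point to handle carefully. It does not affect the argument: any finite number of fixed-frequency modifications changes the kernel by a trigonometric polynomial with $\lambda$-independent $L^1$ norm. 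So it suffices to treat $\Psi(2^{-j}\xi)$ with $\Psi(\eta)=1-\sum_{k\ge1}\varphi(2^{-k}\eta)$ for $|\eta|\ge$ small, extended smoothly to equal $1$ near $0$. This $\Psi$ is smooth and compactly supported in $|\eta|\le 2$.

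Next comes Poisson summation. The periodic kernel is $K_j(x) = \sum_{\xi\in\Z}\Psi(2^{-j}\xi)e^{2\pi i \xi x} = \sum_{n\in\Z} 2^j\check\Psi(2^j(x+n))$, where $\check\Psi$ is the inverse Fourier transform on $\R$, which is Schwartz. Hence $\|K_j\|_{L^1(\T)} \le \int_\R 2^j|\check\Psi(2^j y)|\,dy = \|\check\Psi\|_{L^1(\R)}$, because the periodization of an $L^1(\R)$ function has $L^1(\T)$ norm at most the original. This gives the uniform bound for $\lambda=2^j$. For general $\lambda$, interpret $\mathbb{P}_{\le\lambda}$ as $\mathbb{P}_{\le 2^{\lfloor\log_2\lambda\rfloor}}$, or use the same argument with dilation $\lambda$.

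The only real obstacle is the bookkeeping of matching the finite sum of shells to a single dilated smooth multiplier near low frequencies. I would resolve it as above, by noting that finitely many low modes contribute a uniformly bounded error. The Poisson summation and Young's inequality steps are routine.
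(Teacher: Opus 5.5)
Your proof is correct and follows the route the paper indicates; the paper only remarks that the lemma follows from Poisson summation. You write $\mathbb{P}_{\le 2^j}$ as convolution with the periodization of $2^j\check\Psi(2^j\cdot)$, where $\Psi(\eta)=1-\sum_{k\ge1}\varphi(2^{-k}\eta)$ is already smooth, equals $1$ for $|\eta|<12/7$ and vanishes for $|\eta|\ge 2$, so no extension is needed; this gives the uniform bound $\|K_j\|_{L^1(\T)}\le\|\check\Psi\|_{L^1(\R)}$, and Young's inequality finishes. Your fix for the modes $\xi=\pm1$ that the literal definition omits (its sum starts at $k=1$), adding the fixed kernel correction $-2\cos(2\pi x)$, is also right, but the description of $\psi$ in your first paragraph is muddled, and its claim of exact equality at every integer should be replaced by the precise statement from your second paragraph.
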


Next we introduce the function spaces which will play a prominent role throughout.

\begin{definition}[\textbf{$\dot H^s$ Sobolev spaces}] For $s \in \mathbb{R}$, we define
    $$\dot{H}^{s}(\mathbb{T})=\left\{ f: \sum_{\xi \in \Z \setminus \{0\}}|\xi|^{2s}|\hat{f}(\xi)|^2<\infty\right\}  $$
    with the norm induced by the sum above.
\end{definition}
\begin{remark}
    For every $f\in \dot H^{s}$ for some $s\in \R$, we define the Fourier coefficients
    \[\hat{f}(\xi)=\int_{\T}f(x)e^{-2\pi i\xi x}\,dx, \hspace{0.25cm} \text{ where } \hspace{0.25cm} \T=[0,1],\]
    and so we can define $\mathbb{P}_{2^j}f$ for $j\geq 0$.  Note that each $\mathbb{P}_{2^j}f$ is smooth for $f\in \dot H^{s}$ irrespective of the value of $s\in \R$.
\end{remark}

\begin{definition}{\textbf{(Homogeneous Besov spaces)}}
    For $\alpha \in \R$, $0<p,q\leq \infty$, and $f \in \mathcal{D}'(\T^d)$, define the homogeneous Besov space to be
    \[
        \dot{B}^{\alpha}_{p,q} = \left\{ f: \|f\|_{\dot{B}^{\alpha}_{p,q}} = \left\| 2^{j\alpha}\|\mathbb{P}_{2^j}f\|_{L^p(\T)}\right\|_{\ell^q_{j\geq 0}} <\infty \right\}.
    \]
\end{definition}
\begin{remark}
    Note for $\alpha > 0$ the space $\dot{B}^{\alpha}_{\infty,\infty}$ is equivalent to $\{f \in C^\alpha : \hat{f}(0) = 0\}$. In addition, the space $\dot{B}^{\alpha}_{2,2}$ is equivalent to the homogeneous Sobolev space $\dot{H}^\alpha$ for all $\alpha \in \R$.
\end{remark}

Following \cite{Gismondi} and \cite{GR}, we utilize a generic intermittent building block which does not solve any stationary PDE. In \cite{GR} this was essential in order to utilize a fully intermittent building block. We do the same here since our setting is the one dimensional torus and we will be using one dimension of intermittency. As was the case in \cite{Gismondi} and \cite{GR}, estimating the error with a homogeneous Sobolev norm ensures that we never require the derivative operator acting on the nonlinearity to "land"; this is in stark contrast to standard convex integration schemes where the error is estimated in either $L^p$ or $C^\alpha$, see \cite{BV2020}. For our purposes, the intermittent slabs from \cite[Lemma 2.12]{Gismondi} will suffice; however it should be noted in this setting the slabs are fully intermittent whereas in \cite{Gismondi} they are not.

\begin{lemma}[\textbf{Intermittent slabs}]\label{lem:boldW}
    Let $\lambda$ be a large power of $2$ and take $0 < \epsilon < 1$ such that $\lambda^{\epsilon}$ is also an integer. Then there exist smooth $\rho_{\lambda,\epsilon}:\T \to \R$ such that
    \begin{enumerate}
        \item\label{w:2} $\int_{\T} \rho_{\lambda,\epsilon} = 0$;
        \item\label{w:3} $\Vert \rho_{\lambda,\epsilon} \Vert_{L^p(\T)} \lesssim \lambda^{(1-\epsilon)\left(\frac{1}{2}-\frac{1}{p}\right)}$;
        \item\label{w:4} $\rho_{\lambda,\epsilon}$ is $\frac{\T}{\lambda^{\epsilon}}$-periodic.
    \end{enumerate}
\end{lemma}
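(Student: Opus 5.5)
The idea is to build $\rho_{\lambda,\epsilon}$ as a periodized, rescaled bump: concentrated at scale $\lambda^{-1}$ and repeated $\lambda^{\epsilon}$ times on $\T$. This spreads the mass over a fraction $\lambda^{\epsilon}\cdot\lambda^{-1}=\lambda^{-(1-\epsilon)}$ of the torus. Normalizing in $L^2$ then gives exactly the stated $L^p$ scaling.

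\textbf{Construction.} Fix once and for all a smooth, mean-zero, nonzero function $\eta\in C_c^\infty(\R)$ supported in $(-1/4,1/4)$. Set $\mu=\lambda^{1-\epsilon}$, which is large because $\epsilon<1$. Define the one-bump profile on the short torus $\T_{\lambda^{-\epsilon}}=[0,\lambda^{-\epsilon})$ by rescaling $\eta$ to width $\lambda^{-1}=\lambda^{-\epsilon}\mu^{-1}$ and height $\mu^{1/2}$:
\[
\rho_{\lambda,\epsilon}(x)=\mu^{1/2}\sum_{k\in\Z}\eta\bigl(\lambda x - \mu k\bigr).
\]
Here $\lambda x-\mu k=\mu(\lambda^{\epsilon}x-k)$, so the translates are centered at the points $k\lambda^{-\epsilon}$.

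\textbf{Verifying the properties.}
\begin{enumerate}
\item[\eqref{w:4}] Replacing $x$ by $x+\lambda^{-\epsilon}$ shifts $k$ by one, so $\rho_{\lambda,\epsilon}$ is $\lambda^{-\epsilon}$-periodic. Since $\lambda^{\epsilon}$ is an integer, it is then also $1$-periodic, hence a smooth function on $\T$.
\item[\eqref{w:2}] The supports of the summands are disjoint and lie in intervals of length $\lambda^{-1}/2$ around $k\lambda^{-\epsilon}$, because $\mu\ge 1$. Each summand has zero integral because $\eta$ does, so $\int_\T\rho_{\lambda,\epsilon}=0$.
\item[\eqref{w:3}] By disjointness of supports,
\[
\|\rho_{\lambda,\epsilon}\|_{L^p(\T)}^p=\lambda^{\epsilon}\,\mu^{p/2}\int_\R|\eta(\lambda x)|^p\,dx=\lambda^{\epsilon-1}\mu^{p/2}\|\eta\|_{L^p}^p=\mu^{p/2-1}\|\eta\|_{L^p}^p .
\]
Taking $p$-th roots gives $\|\rho_{\lambda,\epsilon}\|_{L^p}\lesssim\lambda^{(1-\epsilon)(1/2-1/p)}$, with implied constant $\|\eta\|_{L^p}$, independent of $\lambda$ and $\epsilon$. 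The case $p=\infty$ is immediate.
\end{enumerate}

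\textbf{Main point of care.} There is no real obstacle here; the only thing to check is that the bookkeeping is consistent. Periodicity requires $\lambda^{\epsilon}\in\N$, and the separation of supports requires $\mu\ge1$, so that the translates fit inside their cells. Later steps of the scheme will likely also want $\|\rho_{\lambda,\epsilon}\|_{L^2}\sim1$ and frequency localization near $\lambda$. The first holds by the computation above with $p=2$. For the second, I would note that the Fourier coefficients equal $\mu^{-1/2}\hat\eta(\xi/\lambda)$ for $\xi\in\lambda^{\epsilon}\Z$ and vanish otherwise. These decay rapidly for $|\xi|\gg\lambda$, and since $\eta$ has mean zero they are small for $|\xi|\ll\lambda$. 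If sharper localization is needed, one can instead choose $\eta=\chi^{(N)}$ for a bump $\chi$ and large $N$.
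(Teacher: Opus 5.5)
Your proposal is correct and uses the paper's construction: the paper takes exactly this periodized, $L^2$-scaled bump, $\rho_{\lambda,\epsilon}(x)=\lambda^{\frac{1-\epsilon}{2}}\sum_{n\in\Z}\phi(\lambda x+\lambda^{1-\epsilon}n)$ in \eqref{eq:rho}, and defers the verification to Gismondi's Lemma 2.12, whereas you check periodicity, zero mean, and the $L^p$ scaling directly from the disjointness of the supports. If you take $\|\eta\|_{L^2}=1$, as the paper does for $\phi$, you get $\|\rho_{\lambda,\epsilon}\|_{L^2}=1$ exactly, which is the normalization used later in Lemma \ref{lem:L2_norm}.
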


\begin{proof}
    The construction is given in \cite[Lemma 2.12]{Gismondi} just with $d = 1$. For the purposes of the proceeding proof though, we just state the function is of the form
    \begin{equation}\label{eq:rho}
        \rho_{\lambda,\epsilon}(x) = \sum_{n \in \Z} \lambda^{\frac{1-\epsilon_{q+1}}{2}} \phi\left(\lambda x +\lambda^{1-\epsilon}n\right).
    \end{equation}
    where $\phi:\R \to \R$ is smooth, odd, supported in $(-1,1)$, mean-zero, and $L^2$ normalized.
\end{proof}

Again following \cite[Lemma 2.13]{Gismondi} and \cite[Lemma 2.15]{GR} Fourier series expansions of our intermittent building block will prove very useful, with the main application being the proof of Lemma \ref{lem:L^inf_decay_2}.

\begin{lemma}[\textbf{Fourier series representation of the intermittent slab}]\label{lem:fourier}
    The Fourier series representation of $\rho_{\lambda,\epsilon}$ is
    \begin{equation}\label{eq:fourier_series}
        \rho_{\lambda,\epsilon}(x) = \sum_{n \in \Z} \lambda^{\frac{\epsilon-1}{2}} \hat{\phi}\left(\lambda^{\epsilon-1}n\right) e^{2\pi i \lambda^{\epsilon}  n x}.
    \end{equation}
\end{lemma}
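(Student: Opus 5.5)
The plan is to compute the Fourier coefficients of $\rho_{\lambda,\epsilon}$ directly from the representation \eqref{eq:rho}, by unfolding the periodic sum into an integral over $\R$ (a Poisson-summation-type argument). Throughout, $\hat\phi(\eta)=\int_\R \phi(y)e^{-2\pi i \eta y}\,dy$, matching the normalization of Fourier coefficients on $\T=[0,1]$. I read the exponent in \eqref{eq:rho} as $\frac{1-\epsilon}{2}$ (the subscript $q+1$ there appears to be a typo). I also use throughout that $\lambda^\epsilon$ is an integer.

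\textbf{Step 1 (rewrite as a $\T$-periodization).} Set $g(x)=\lambda^{\frac{1-\epsilon}{2}}\phi(\lambda x)$, which is supported in $(-\lambda^{-1},\lambda^{-1})$. Then
$$
\phi(\lambda x+\lambda^{1-\epsilon}n)=\lambda^{-\frac{1-\epsilon}{2}}\, g(x+n\lambda^{-\epsilon}),
$$
so $\rho_{\lambda,\epsilon}(x)=\sum_{n\in\Z} g(x+n\lambda^{-\epsilon})$. Because $\lambda^\epsilon\in\N$, every $n$ can be written uniquely as $n=n_0+\lambda^\epsilon m$ with $0\le n_0<\lambda^\epsilon$ and $m\in\Z$. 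This gives
$$
\rho_{\lambda,\epsilon}(x)=\sum_{n_0=0}^{\lambda^\epsilon-1}\ \sum_{m\in\Z} g\bigl(x+n_0\lambda^{-\epsilon}+m\bigr),
$$
which is a finite sum of $1$-periodizations of translates of $g$. In particular $\rho_{\lambda,\epsilon}$ is a smooth function on $\T$ (locally the sum is finite).

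\textbf{Step 2 (compute coefficients).} For $k\in\Z$, the standard unfolding $\int_0^1\sum_m h(x+m)e^{-2\pi i kx}\,dx=\hat h(k)$ yields
$$
\hat\rho_{\lambda,\epsilon}(k)=\hat g(k)\sum_{n_0=0}^{\lambda^\epsilon-1}e^{2\pi i k n_0\lambda^{-\epsilon}},
\qquad
\hat g(k)=\lambda^{\frac{1-\epsilon}{2}-1}\hat\phi(k/\lambda).
$$
The geometric sum equals $\lambda^\epsilon$ when $\lambda^\epsilon\mid k$ and $0$ otherwise. Writing $k=\lambda^\epsilon n$ in the surviving case gives
$$
\hat\rho_{\lambda,\epsilon}(\lambda^\epsilon n)=\lambda^{\epsilon}\lambda^{\frac{1-\epsilon}{2}-1}\hat\phi(\lambda^{\epsilon-1}n)=\lambda^{\frac{\epsilon-1}{2}}\hat\phi(\lambda^{\epsilon-1}n).
$$
These are exactly the coefficients in \eqref{eq:fourier_series}. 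As a consistency check, the vanishing of all other frequencies reflects property \eqref{w:4} of Lemma \ref{lem:boldW}.

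\textbf{Step 3 (convergence).} Since $\phi\in C_c^\infty$, $\hat\phi$ is Schwartz, so the coefficients decay rapidly in $n$. The series in \eqref{eq:fourier_series} therefore converges absolutely and uniformly. Since $\rho_{\lambda,\epsilon}$ is smooth, its Fourier series converges to it pointwise, and equality holds everywhere.

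The only point needing care is Step 1: it requires the reindexing $n=n_0+\lambda^\epsilon m$ (valid precisely because $\lambda^\epsilon$ is an integer) and consistent tracking of the powers of $\lambda$. Everything else is routine.
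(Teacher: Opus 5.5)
Your proposal is correct and is essentially the paper's argument: the paper simply applies the Poisson summation formula to \eqref{eq:rho}. Your unfolding into $\lambda^{\epsilon}$ shifted $1$-periodizations plus the geometric sum over residues is that formula at period $\lambda^{-\epsilon}$ written out explicitly, and both your bookkeeping of the powers of $\lambda$ and your reading of the $\epsilon_{q+1}$ exponent in \eqref{eq:rho} as a typo for $\epsilon$ are right.
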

\begin{proof}
    Applying the Poisson summation formula to ~\eqref{eq:rho} gives ~\eqref{eq:fourier_series}.
\end{proof}

\begin{lemma}[\textbf{$L^2$ norm of intermittent slab}]\label{lem:L2_norm}
$\rho_{\lambda,\epsilon}$ is $L^2(\T)$ normalized to leading order. More precisely,
$$
\Vert \rho_{\lambda,\epsilon} \Vert_{L^2} = 1 + O\left(\lambda^{\epsilon-1}\right).
$$
\end{lemma}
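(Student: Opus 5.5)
We compute the $L^2$ norm directly from the explicit form \eqref{eq:rho}. The torus is $[0,1]$ and $\lambda$ is a large power of $2$, so $\lambda^{1-\epsilon} = \lambda/\lambda^{\epsilon}$ is a large positive number. Write
$$
\rho_{\lambda,\epsilon}(x) = \lambda^{\frac{1-\epsilon}{2}} \sum_{n} \phi\bigl(\lambda x + \lambda^{1-\epsilon} n\bigr),
$$
where each summand $\phi(\lambda x + \lambda^{1-\epsilon}n)$ is supported on an interval of length $2/\lambda$ centred at $-\lambda^{-\epsilon} n$. Adjacent centres are spaced $\lambda^{-\epsilon}$ apart, which is much larger than $2/\lambda$ once $\lambda^{1-\epsilon} > 2$. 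So the supports are pairwise disjoint, and exactly $\lambda^{\epsilon}$ bumps meet one period $[0,1]$, counted modulo $1$ using the $\T/\lambda^\epsilon$-periodicity in Lemma \ref{lem:boldW}.

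By disjointness, $|\rho_{\lambda,\epsilon}|^2$ is the sum of the squares of the bumps. Integrating over $\T$ and changing variables $y = \lambda x + \lambda^{1-\epsilon}n$ gives
$$
\Vert \rho_{\lambda,\epsilon}\Vert_{L^2}^2 = \lambda^{1-\epsilon}\cdot \lambda^{\epsilon}\cdot \lambda^{-1}\Vert \phi\Vert_{L^2(\R)}^2 = 1 .
$$

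This computation uses the normalization exponent $\frac{1-\epsilon}{2}$. The displayed formula \eqref{eq:rho} has the typo $\epsilon_{q+1}$, which I will read as $\epsilon$. On this reading the $L^2$ norm is exactly $1$, which is stronger than the claim.

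As a cross-check, and to cover any lost normalization (for instance if the construction in \cite{Gismondi} has an extra harmonic or slight overlap), I will also argue through Lemma \ref{lem:fourier}. By Parseval,
$$
\Vert \rho_{\lambda,\epsilon}\Vert_{L^2}^2 = \sum_{n\in\Z} \lambda^{\epsilon-1}\,\bigl|\hat\phi(\lambda^{\epsilon-1} n)\bigr|^2 .
$$
This is a Riemann sum with mesh $h = \lambda^{\epsilon-1}$ for $\int_\R |\hat\phi|^2 = \Vert\phi\Vert_{L^2}^2 = 1$. Since $|\hat\phi|^2$ is Schwartz, a first-order Riemann sum estimate gives
$$
\Bigl|\, h\sum_n g(hn) - \int g \,\Bigr| \lesssim h\,\Vert g'\Vert_{L^1}
$$
with $g = |\hat\phi|^2$. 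This yields $\Vert\rho_{\lambda,\epsilon}\Vert_{L^2}^2 = 1 + O(\lambda^{\epsilon-1})$. Taking square roots, using $\sqrt{1+t} = 1 + O(t)$, gives the stated bound.

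The mean-zero condition $\hat\phi(0)=0$ only removes the $n=0$ term, which is consistent with item (1) of Lemma \ref{lem:boldW}. (Poisson summation would make the sum equal the integral up to rapidly decaying aliasing terms, but the cruder $O(h)$ bound already suffices.)

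The main obstacle is bookkeeping rather than analysis. I need to confirm the normalization exponent and the $2\pi$ convention in $\hat\phi$, so that the constant is exactly $\Vert\phi\Vert_{L^2}=1$. I also need to confirm that the Riemann sum error is first order in $h$ with constants depending only on $\phi$. Both are routine once the conventions of Lemma \ref{lem:fourier} are fixed.
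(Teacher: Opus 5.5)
Your proof is correct, but your main argument takes a different route from the paper's.

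\textbf{The paper's route.} The paper works only on the Fourier side. It applies Plancherel to the series in Lemma \ref{lem:fourier} and reads $\sum_n \lambda^{\epsilon-1}|\hat\phi(\lambda^{\epsilon-1}n)|^2$ as a Riemann sum for $\Vert\hat\phi\Vert_{L^2(\R)}^2=1$ with mesh $\lambda^{\epsilon-1}$. It then asserts an $O(\lambda^{\epsilon-1})$ error without justification. Your cross-check is exactly this argument, and your bound $|h\sum_n g(hn)-\int g|\le h\Vert g'\Vert_{L^1}$ supplies the step the paper leaves out.

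\textbf{Your primary route.} You work in physical space instead. Once $\lambda^{1-\epsilon}\ge 2$, the translated bumps have disjoint supports, and unfolding one period gives the exact identity $\Vert\rho_{\lambda,\epsilon}\Vert_{L^2}=1$. This is more elementary and strictly stronger than the stated estimate. In particular it shows that $\tilde C_{q+1}=0$ in \eqref{eq:tilde_Cq+1}, so that correction term in \eqref{eq:Error_cancellation} disappears.

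\textbf{Why the two agree.} By Poisson summation, $h\sum_n g(hn)=\sum_k \hat g(k/h)$. For $g=|\hat\phi|^2$, the function $\hat g$ is the autocorrelation of $\phi$, which is supported in $(-2,2)$. Hence every aliasing term with $k\neq 0$ vanishes once $h=\lambda^{\epsilon-1}\le 1/2$. So the Riemann sum is exact in precisely the regime where your supports are disjoint.

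\textbf{What each buys.} The paper's route uses only smoothness and decay of $\hat\phi$. It would therefore survive a non-compactly supported profile or overlapping bumps, with a small but nonzero error. Your disjointness argument depends essentially on the compact support of $\phi$ and the separation condition $\lambda^{1-\epsilon}\ge 2$.
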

\begin{proof}
    From Lemma \ref{lem:fourier} and the Plancherel theorem we have the expression
    \begin{equation}\label{eq:Plancherel}
        \Vert \rho_{\lambda,\epsilon} \Vert_{L^2}^2 = \sum_{n \in \Z} \lambda^{\epsilon-1} \left|\hat{\phi}\left(\lambda^{\epsilon-1}n\right)\right|^2.
    \end{equation}
We recognize the right hand side of ~\eqref{eq:Plancherel} as being a Riemann sum approximation of $\Vert \hat{\phi} \Vert_{L^2(\R)}^2$ with step size $\lambda^{\epsilon-1}$. Applying Plancherel one more time, $\Vert \hat{\phi} \Vert_{L^2(\R)} = \Vert \phi \Vert_{L^2(\R)} = 1$ and since $\phi$ is smooth and compactly supported, we have that
\begin{equation}\label{eq:square_ofL2_norm}
    \Vert \rho_{\lambda,\epsilon} \Vert_{L^2(\T)}^2 = 1 + O\left(\lambda^{\epsilon-1}\right).
\end{equation}
Taking the square root of ~\eqref{eq:square_ofL2_norm} and then using the Taylor expansion gives the result.
\end{proof}

The following lemma will show that in every norm that we will be concerned with, the projection of the product of a smooth function and our intermittent slabs can be made arbitrarily small simply by choosing the minimum active frequency of the intermittent slab large enough.

\begin{lemma}[\textbf{High-low products in negative Sobolev norms}]\label{lem:L^inf_decay_2}

    For any $a\in C^\infty(\T)$,
    $$
    \lim_{\lambda \to \infty} \left\Vert \mathbb{P}_{> \lambda^2}(a \rho_{\lambda,\epsilon} )\right\Vert_{L^\infty} = 0.
    $$
\end{lemma}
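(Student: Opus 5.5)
Write $a(x)=\sum_{k\in\Z}\hat a(k)e^{2\pi i k x}$ and use the Fourier series of $\rho_{\lambda,\epsilon}$ from Lemma \ref{lem:fourier}. Multiplying the two series gives
$$
a\rho_{\lambda,\epsilon}(x)=\sum_{k,n\in\Z}\hat a(k)\,\lambda^{\frac{\epsilon-1}{2}}\hat\phi\left(\lambda^{\epsilon-1}n\right)e^{2\pi i(k+\lambda^\epsilon n)x}.
$$
The plan is to bound the $L^\infty$ norm of $\mathbb{P}_{>\lambda^2}$ of this by the $\ell^1$ norm of the Fourier coefficients. To do so, I will exhibit an explicit decomposition of $\mathbb{P}_{>\lambda^2}$ as a Fourier multiplier whose symbol $m_\lambda(\xi)$ takes values in $[0,1]$ and vanishes for $|\xi|\le \lambda^2$; this holds because it is $1-\sum_{j\le \log_2\lambda^2}\varphi_j$ off zero, and these partial sums lie in $[0,1]$. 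Hence
$$
\left\Vert \mathbb{P}_{>\lambda^2}(a\rho_{\lambda,\epsilon})\right\Vert_{L^\infty}\le \sum_{(k,n):\,|k+\lambda^\epsilon n|>\lambda^2}|\hat a(k)|\,\lambda^{\frac{\epsilon-1}{2}}\left|\hat\phi\left(\lambda^{\epsilon-1}n\right)\right|.
$$

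Next I split the sum according to which index is large. If $|k+\lambda^\epsilon n|>\lambda^2$, then either $|k|>\lambda^2/2$ or $\lambda^\epsilon|n|>\lambda^2/2$, i.e. $|\lambda^{\epsilon-1}n|>\lambda/2$.

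In the first region, I bound $\lambda^{\frac{\epsilon-1}{2}}|\hat\phi|\le\lambda^{\frac{\epsilon-1}{2}}\Vert\phi\Vert_{L^1}$ uniformly. I also bound the $n$-sum: since $\hat\phi$ is Schwartz, $\sum_n \lambda^{\epsilon-1}|\hat\phi(\lambda^{\epsilon-1}n)|\lesssim 1$ as a Riemann sum. This gives $\sum_n\lambda^{\frac{\epsilon-1}{2}}|\hat\phi(\lambda^{\epsilon-1}n)|\lesssim \lambda^{\frac{1-\epsilon}{2}}$. Meanwhile $\sum_{|k|>\lambda^2/2}|\hat a(k)|\lesssim_N \lambda^{-2N}$ by smoothness of $a$. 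The product therefore tends to zero.

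In the second region, I use $\sum_k|\hat a(k)|<\infty$ and the Schwartz decay $|\hat\phi(\eta)|\lesssim_N|\eta|^{-N}$. This gives
$$
\sum_{|\lambda^{\epsilon-1}n|>\lambda/2}\lambda^{\frac{\epsilon-1}{2}}|\hat\phi(\lambda^{\epsilon-1}n)|\lesssim \lambda^{\frac{1-\epsilon}{2}}\int_{|\eta|>\lambda/4}|\eta|^{-N}\,d\eta+\text{(boundary term)}\lesssim \lambda^{\frac{1-\epsilon}{2}}\lambda^{1-N},
$$
which also tends to zero. Here I treat the tail as a Riemann sum with step $\lambda^{\epsilon-1}$, or simply bound each term by $(\lambda/2)^{-N}$ and sum over $n$ with the weight $|\lambda^{\epsilon-1} n|^{-2}$.

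The main obstacle is only bookkeeping. First, the multiplier $\mathbb{P}_{>\lambda^2}$ must be controlled on the Fourier side; this works because we estimate via absolute summability, not via Lemma \ref{lem:proj}. Second, the Riemann-sum normalization contributes a growing factor $\lambda^{(1-\epsilon)/2}$, and this must be beaten by the arbitrary polynomial decay coming from the smoothness of $a$ and $\phi$. Both points are handled by choosing $N$ large, e.g. $N=2$ already suffices.
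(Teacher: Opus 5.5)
Your proposal is correct and follows essentially the paper's route. You expand $a\rho_{\lambda,\epsilon}$ in Fourier series and bound $\Vert \mathbb{P}_{>\lambda^2}(a\rho_{\lambda,\epsilon})\Vert_{L^\infty}$ by the $\ell^1$ sum of coefficients with $|k+\lambda^\epsilon n|\gtrsim\lambda^2$. You then split according to whether $k$ or $\lambda^\epsilon n$ carries the large frequency, using rapid decay of $\hat a$ in one region and $\hat a\in\ell^1$ plus Schwartz decay of $\hat\phi$ in the other, exactly as in the paper's $\mathcal{S}_1/\mathcal{S}_2$ split. The only cosmetic difference is in the large-$k$ region: you sum over all $n$ with a Riemann-sum bound $\lesssim\lambda^{(1-\epsilon)/2}$, whereas the paper restricts to $|n|\lesssim\lambda^{2-\epsilon}$ and uses the cruder count bound $\lambda^{2-\epsilon}$; either factor is beaten by the polynomial decay of $\hat a$.
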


    \begin{proof}
For $a \in C^\infty(\mathbb{T})$, we begin by expressing its Fourier series as
\begin{equation*}
    a(x)=\sum_{k\in\mathbb{Z}} \widehat{a}(k) e^{2\pi i kx}.
\end{equation*}
Together with \eqref{eq:fourier_series}, this allows us to write
\begin{equation}
    (a\rho_{\lambda,\epsilon})(x)=\sum_{n,k \in \Z} \widehat{a}(k)\lambda^{\frac{\epsilon-1}{2}} \hat{\phi}\left(\lambda^{\epsilon-1}n\right) e^{2\pi i \left(\lambda^{\epsilon}  n+k\right) x}.
\end{equation}
Recall $\lambda$ was chosen to be a large power of $2$. So let us put $\mu = 2\log_2(\lambda) \in \N$. Consequently, from Definition \ref{def:projs} the high-frequency projection has the representation
\begin{equation*}
    \mathbb{P}_{> \lambda^2}(a \rho_{\lambda,\epsilon} )(x)=\sum_{j = \mu}^\infty \sum_{n,k \in \mathbb{Z}} \varphi_j\left(\lambda^{\epsilon}n + k\right) \widehat{a}(k)\lambda^{\frac{\epsilon-1}{2}} \hat{\phi}\left(\lambda^{\epsilon-1}n\right) e^{2\pi i \left(\lambda^{\epsilon}  n+k\right) x},
\end{equation*}
and this gives
\begin{equation*}
    \left\Vert \mathbb{P}_{> \lambda^2}(a \rho_{\lambda,\epsilon} )\right\Vert_{L^\infty} \leq \lambda^{\frac{\epsilon-1}{2}}\sum_{\substack{n,k \in \Z\\ \left|\lambda^{\epsilon}  n+k\right|\geq \frac{1}{2}\lambda^2}} |\widehat{a}(k)| \left|\hat{\phi}\left(\lambda^{\epsilon-1}n\right)\right| \leq  \mathcal{S}_1+\mathcal{S}_2,
\end{equation*}
where we put
\begin{equation*}
    \mathcal{S}_1 \coloneq  \lambda^{\frac{\epsilon-1}{2}} \sum_{|n| \geq \frac{1}{4} \lambda^{2-\epsilon}}\;\sum_{\substack{k \in \Z\\ \left|\lambda^{\epsilon}  n+k\right|\geq \frac{1}{2}\lambda^2}} |\widehat{a}(k)| \left|\hat{\phi}\left(\lambda^{\epsilon-1}n\right)\right|
\end{equation*}
and 
\begin{equation*}
    \mathcal{S}_2\coloneq \lambda^{\frac{\epsilon-1}{2}}\sum_{|n| \leq \frac{1}{4} \lambda^{2-\epsilon}}\;\sum_{\substack{k \in \Z\\ \left|\lambda^{\epsilon}  n+k\right|\geq \frac{1}{2}\lambda^2}} |\widehat{a}(k)| \left|\hat{\phi}\left(\lambda^{\epsilon-1}n\right)\right|.
\end{equation*}
We will estimate each of these terms separately.

\underline{Estimate for $\mathcal{S}_1$}:
Since $a \in C^\infty(\T)$, then $\hat{a} \in \ell^1(\Z)$. Hence
\begin{equation}\label{eq:S1_est}
    \mathcal{S}_1 \lesssim \lambda^{\frac{\epsilon-1}{2}}\sum_{|n| \gtrsim \lambda^{2-\epsilon}}\;\sum_{k \in \Z} |\widehat{a}(k)| \left|\hat{\phi}\left(\lambda^{\epsilon-1}n\right)\right| \lesssim \lambda^{\frac{\epsilon-1}{2}}\sum_{|n| \gtrsim \lambda^{2-\epsilon}}\;\left|\hat{\phi}\left(\lambda^{\epsilon-1}n\right)\right|.
\end{equation}
Now since $\phi$ was assumed to be Schwartz, we have the estimate
\begin{equation*}
    \left|\hat{\phi}(\xi)\right| \lesssim \frac{1}{(1 + |\xi|)^N}
\end{equation*}
for $N > 100$. Hence we have
\begin{equation}\label{eq:sum_hat_phi_est}
    \sum_{|n| \gtrsim \lambda^{2-\epsilon}} \left|\hat{\phi}\left(\lambda^{\epsilon-1}n\right)\right| \lesssim \sum_{|n| \gtrsim \lambda^{2-\epsilon}} \frac{1}{(1 + \left|\lambda^{\epsilon-1}n\right|)^N} \lesssim \lambda^{N(1-\epsilon)}\sum_{|n| \gtrsim \lambda^{2-\epsilon}} |n|^{-N}.
\end{equation}
Now using the integral test we have that
\begin{equation}\label{eq:int_test}
    \sum_{|n| \gtrsim \lambda^{2-\epsilon}} |n|^{-N} \simeq \int_{\lambda^{2-\epsilon}}^\infty x^{-N}\, dx \simeq \lambda^{(1-N)(2-\epsilon)}.
\end{equation}
Combining ~\eqref{eq:S1_est}, ~\eqref{eq:sum_hat_phi_est}, and ~\eqref{eq:int_test} we have the estimate
$$
\mathcal{S}_1 \lesssim \lambda^{\frac{\epsilon-1}{2}} \lambda^{N(1-\epsilon)} \lambda^{(1-N)(2-\epsilon)}
$$
which clearly tends to $0$ as we send $\lambda \to \infty$.

\underline{Estimate of $\mathcal{S}_2$:}
\;

In the regime where $|n| \leq \frac{1}{4}\lambda^{2-\epsilon}$, it follows that 
$\lambda^{\epsilon}|n| \leq \frac{1}{4} \lambda^2$. 
Consequently, from the projection condition we deduce
\[
|k| \geq \left| k+\lambda^{\epsilon} n\right| -\frac{1}{4}\lambda^{\epsilon}|n|\geq \frac{1}{4}\lambda^2 .
\]

Thus, in this region the frequency $k$ is necessarily of high magnitude.

We now exploit the rapid decay of $\widehat{a}(k)$. Since $a \in C^\infty(\mathbb{T})$, its Fourier coefficients decay faster than any polynomial rate. In particular, for $|k|\geq \frac{1}{4} \lambda^2_{q+1}$ (that is, in the high–frequency regime), we have the bound
\[
|\widehat{a}(k)|\lesssim |k|^{-N}
\]
for $N$ again chosen larger than $100$. Therefore, using the trivial estimate
$$
\sum_{|n| \lesssim \lambda^{2-\epsilon}} \left|\hat{\phi}\left(\lambda^{\epsilon-1} n\right)\right| \lesssim \lambda^{2-\epsilon}
$$
we have
\begin{equation}\label{eq:S2_est}
    \mathcal{S}_2 \lesssim \lambda^{\frac{\epsilon-1}{2}}\lambda^{2-\epsilon} \sum_{|k|\geq\frac{1}{4} \lambda^2} |k|^{-N}
\end{equation}
and we once again apply \eqref{eq:int_test}, this time with $\lambda^2$, to finish the estimate in ~\eqref{eq:S2_est} and complete the proof.
\end{proof}

The following lemma will find use in Section \ref{section5} where it can simplify many of the estimates performed.

\begin{lemma}[\textbf{Kato-Ponce-type product estimate}]\label{lem:pseudo_diff_cont}
    Fix $\alpha,\beta \in C^{\infty}(\T^2)$ with $\beta$ having zero mean. Then for $s \geq 0$ we have
    $$
        \Vert \alpha \beta \Vert_{\dot{H}^{-s}} \lesssim \Vert \alpha \Vert_{C^s} \Vert \beta \Vert_{\dot{H}^{-s}}.
    $$
\end{lemma}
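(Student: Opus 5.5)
The plan is to prove the estimate by duality, reducing it to the boundedness of multiplication by $\alpha$ on the positive-regularity space $\dot H^{s}$. We may take the domain to be $\T$ (the statement says $\T^2$, but the argument is dimension independent).

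\textbf{Duality reduction.} For mean-zero $\beta$ we have
$$
\Vert \alpha\beta\Vert_{\dot H^{-s}} = \sup\left\{ \left|\langle \alpha\beta, \psi\rangle\right| : \psi \text{ smooth, mean zero},\ \Vert \psi\Vert_{\dot H^{s}} \le 1\right\}.
$$
The zero mode of $\alpha\beta$ is invisible to the $\dot H^{-s}$ norm, so $\psi$ may be taken mean zero. Then write
$$
\langle \alpha\beta,\psi\rangle = \langle \beta, \mathbb{P}_{\neq 0}(\alpha\psi)\rangle,
$$
which is legitimate because $\beta$ has zero mean, so the mean of $\alpha\psi$ does not contribute. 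This gives
$$
\left|\langle \alpha\beta,\psi\rangle\right| \le \Vert\beta\Vert_{\dot H^{-s}}\,\Vert \alpha\psi\Vert_{\dot H^{s}}.
$$

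\textbf{Multiplier estimate.} It remains to show $\Vert \alpha\psi\Vert_{\dot H^s}\lesssim \Vert\alpha\Vert_{C^s}\Vert\psi\Vert_{\dot H^s}$ for mean-zero $\psi$.

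For integer $s$:
\begin{itemize}
\item Expand $(\alpha\psi)^{(s)}$ by the Leibniz rule.
\item Bound each term $\alpha^{(i)}\psi^{(s-i)}$ in $L^2$ by $\Vert\alpha\Vert_{C^{i}}\Vert\psi^{(s-i)}\Vert_{L^2}$.
\item Use Poincaré/Fourier monotonicity for the mean-zero $\psi$: $\Vert\psi^{(s-i)}\Vert_{L^2}\lesssim\Vert\psi\Vert_{\dot H^s}$, since $|\xi|\ge1$ on nonzero modes.
\end{itemize}

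For non-integer $s$, use the classical Kato–Ponce inequality
$$
\Vert \alpha\psi\Vert_{H^s}\lesssim \Vert\alpha\Vert_{L^\infty}\Vert\psi\Vert_{H^s}+\Vert\alpha\Vert_{C^{s}}\Vert\psi\Vert_{L^2}.
$$
Alternatively, apply a paraproduct decomposition $T_\alpha\psi+T_\psi\alpha+R(\alpha,\psi)$ with the Littlewood–Paley projectors of Definition \ref{def:projs}, estimating each shell. Then use $\Vert\psi\Vert_{H^s}\simeq\Vert\psi\Vert_{\dot H^s}$ for mean-zero $\psi$.

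\textbf{Main obstacle.} The hard step is the non-integer case, specifically the terms where $\alpha$ carries the high frequency. There one needs $\Vert\mathbb{P}_{2^j}\alpha\Vert_{L^\infty}\lesssim 2^{-js}\Vert\alpha\Vert_{C^s}$ (Besov characterization of $C^s$) and square-summability in $j$ against $\Vert\psi\Vert_{L^2}$. A slight loss can be absorbed by interpreting $C^s$ as $C^{s+\delta}$. This is harmless for the paper, since $\alpha$ is smooth and only the structure of the bound matters.
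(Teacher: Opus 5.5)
Your duality reduction is correct, and in fact reversible. Your integer case is complete: for $s=k\in\N$, the Leibniz rule together with $|\xi|\ge 1$ on nonzero modes gives $\Vert \mathbb P_{\neq 0}(\alpha\psi)\Vert_{\dot H^k}\lesssim \Vert\alpha\Vert_{C^k}\Vert\psi\Vert_{\dot H^k}$, and hence the lemma. The paper gives no argument of its own beyond citing [GR, Lemma 2.16], so this is the natural route.

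The gap is the non-integer case. The inequality you call the classical Kato--Ponce estimate is misquoted. The classical version has $\Vert J^s\alpha\Vert_{L^\infty}$, not $\Vert\alpha\Vert_{C^s}$. In your paraproduct alternative, the term $T_\psi\alpha$ needs $(2^{js}\Vert\mathbb P_{2^j}\alpha\Vert_{L^\infty})_j\in\ell^2$, which the H\"older--Zygmund norm does not provide.

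This is also not a slight loss that can be interpreted away. With respect to the pairing $\int_{\T} fg$, the adjoint of $\beta\mapsto\mathbb P_{\neq0}(\alpha\beta)$ on mean-zero $\dot H^{-s}$ is $\psi\mapsto\mathbb P_{\neq0}(\alpha\psi)$ on mean-zero $\dot H^{s}$. So the lemma is equivalent to the multiplier bound, and both fail for every non-integer $s>0$.

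Here is a counterexample. Take $\alpha_N=\sum_{j=2}^N 2^{-js}\cos(2\pi 2^j x)$ and $\psi=\cos(2\pi x)$.
\begin{enumerate}
\item For $s\notin\Z$, these lacunary sums satisfy $\sup_N\Vert\alpha_N\Vert_{C^s}<\infty$.
\item The product $\alpha_N\psi$ has coefficient $2^{-js}/4$ at each of the distinct frequencies $\pm(2^j\pm 1)$, $2\le j\le N$, so $\Vert\alpha_N\psi\Vert_{\dot H^s}\simeq\sqrt N$.
\item On the negative side, $\beta_N=N^{-1/2}\sum_{j=2}^N 2^{js}\cos(2\pi(2^j+1)x)$ is smooth, has mean zero, and satisfies $\Vert\beta_N\Vert_{\dot H^{-s}}\simeq 1$.
\item Yet $\widehat{\alpha_N\beta_N}(1)=(N-1)/(4\sqrt N)$, so $\Vert\alpha_N\beta_N\Vert_{\dot H^{-s}}\gtrsim\sqrt N$.
\end{enumerate}
Thus no proof of the statement as written exists for $s\notin\Z$.

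What your argument does prove is enough for the paper, but the lemma must be restated, not reinterpreted. Write $s=k+\theta$ with $k\in\N$ and $0<\theta<1$. Complex interpolation of $\psi\mapsto\mathbb P_{\neq0}(\alpha\psi)$ between the mean-zero spaces $\dot H^k$ and $\dot H^{k+1}$ (weighted $\ell^2$ spaces, so the interpolation is exact), followed by your duality step, gives $\Vert\alpha\beta\Vert_{\dot H^{-s}}\lesssim\Vert\alpha\Vert_{C^k}^{1-\theta}\Vert\alpha\Vert_{C^{k+1}}^{\theta}\Vert\beta\Vert_{\dot H^{-s}}$. For $\alpha$ with frequencies in $B(0,\mu)$, Lemma \ref{lem:Bernstein}(a) bounds this by $\mu^{s}\Vert\alpha\Vert_{L^\infty}\Vert\beta\Vert_{\dot H^{-s}}$, which is exactly the power used in \eqref{eq:second_term_osc_error}.

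Alternatively, since only $s>5/2$ matters, you can take $s=3$ and use your integer argument alone. Your $C^{s+\delta}$ fix also works: it costs a factor $\lambda_{q+1}^{2\delta}$ in \eqref{eq:second_term_osc_error}, which $\lambda_{q+1}^{-3s}$ absorbs. But it is a correction of the lemma, not a reading of it.
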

\begin{proof}
    See \cite[Lemma 2.16]{GR}\footnote{This proof crucially utilizes \cite[~Proposition 1]{BOZ}}.
\end{proof}

We also collect here some Bernstein type inequalities that will prove useful.

\begin{lemma}[\textbf{Bernstein type estimates}]\label{lem:Bernstein}
    Suppose $u,v:\T \to \R$ are smooth with the frequency support of $u$ contained in $B(0,\mu)$ and the frequency support of $v$ contained in $B(0,2\mu) \setminus B(0,\mu)$, $\mu \gg 1$. Then we have the following estimates:
    \begin{enumerate}
        \item [(a)] For $s \geq 0$ we have
        $$
        \Vert u \Vert_{C^s} \lesssim \mu^s \Vert u \Vert_{L^\infty} \quad \text{and} \quad \Vert u \Vert_{\dot{B}^s_{\infty,\infty}} \lesssim \mu^s \Vert u \Vert_{L^\infty}. 
        $$
        \item [(b)] For $s \in \R$ we have
        $$
        \Vert v \Vert_{\dot{H}^s} \lesssim \mu^s \Vert v \Vert_{L^2} \quad \text{and} \quad \Vert v \Vert_{\dot{B}^s_{\infty,\infty}} \lesssim \mu^s \Vert v \Vert_{L^\infty}.
        $$
    \end{enumerate}
\end{lemma}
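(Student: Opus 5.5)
The plan is to argue directly on the Fourier side: both inequalities reduce to comparing the multipliers $|\xi|^s$ or $2^{js}$ with $\mu^s$ on the frequency supports. I will treat the easy cases (b) first, then (a).

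\textbf{Part (b), Sobolev estimate.} By Plancherel,
$$\Vert v\Vert_{\dot H^s}^2=\sum_{\mu\le|\xi|\le 2\mu}|\xi|^{2s}|\hat v(\xi)|^2 .$$
On the support we have $\mu\le|\xi|\le2\mu$, so $|\xi|^{2s}\le \max(1,2^{2s})\,\mu^{2s}$ for every $s\in\R$. Hence $\Vert v\Vert_{\dot H^s}\lesssim_s\mu^s\Vert v\Vert_{L^2}$.

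\textbf{Part (b), Besov estimate.} By Definition \ref{def:projs}, $\varphi_j$ is supported in $\{6\cdot2^j/7\le|\xi|\le 2^{j+1}\}$. Only the boundedly many $j$ with $2^j\sim\mu$ meet the annulus $\mu\le|\xi|\le2\mu$; for these $j$, $2^{js}\sim_s\mu^s$. For each such $j$, $\mathbb P_{2^j}=\mathbb P_{\le 2^j}-\mathbb P_{\le 2^{j-1}}$, so Lemma \ref{lem:proj} gives $\Vert\mathbb P_{2^j}v\Vert_{L^\infty}\lesssim\Vert v\Vert_{L^\infty}$ uniformly in $j$. Taking the supremum over $j$ gives the claim.

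\textbf{Part (a), Besov estimate.} Again by Lemma \ref{lem:proj}, $\Vert \mathbb P_{2^j}u\Vert_{L^\infty}\lesssim\Vert u\Vert_{L^\infty}$. Since $\mathbb P_{2^j}u=0$ once $6\cdot 2^j/7>\mu$, only $2^j\lesssim\mu$ contribute. For $s\ge0$ this gives $2^{js}\lesssim\mu^s$, and the bound follows.

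\textbf{Part (a), H\"older estimate.} This is the only step needing real care. First, $\Vert u\Vert_{L^\infty}\le\Vert u\Vert_{L^\infty}$ trivially. Write $u=\mathbb P_{\le 2^J}u$ with $2^J\sim\mu$, so that $u=K*u$ where $K$ is the Littlewood--Paley kernel at scale $\mu$ (a rescaled periodized Schwartz function). Then:
\begin{itemize}
\item for integer $k\le s$, $u^{(k)}=K^{(k)}*u$ and $\Vert K^{(k)}\Vert_{L^1}\lesssim\mu^k$, so $\Vert u^{(k)}\Vert_{L^\infty}\lesssim\mu^k\Vert u\Vert_{L^\infty}$;
\item for the fractional part $\theta=s-\lfloor s\rfloor\in(0,1)$, set $f=u^{(\lfloor s\rfloor)}$ and interpolate: for $|h|\ge\mu^{-1}$ use $|f(x+h)-f(x)|\le2\Vert f\Vert_\infty$, and for $|h|<\mu^{-1}$ use the mean value theorem with $\Vert f'\Vert_\infty\lesssim\mu\Vert f\Vert_\infty$.
\end{itemize}
Either way,
$$|f(x+h)-f(x)|\lesssim\mu^{\theta}|h|^\theta\,\mu^{\lfloor s\rfloor}\Vert u\Vert_{L^\infty}.$$
Summing the seminorms, and using $\mu\gg1$ so that the lower-order terms are dominated by $\mu^s$, yields $\Vert u\Vert_{C^s}\lesssim\mu^s\Vert u\Vert_{L^\infty}$.
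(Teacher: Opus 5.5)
Your proposal is correct and follows essentially the same route as the paper. The two Besov estimates come from Littlewood--Paley localization with uniform $L^\infty$ bounds on $\mathbb{P}_{2^j}$, the $\dot H^s$ estimate from Plancherel, and the $C^s$ bound from a reproducing kernel at scale $\mu$ with $\|K^{(k)}\|_{L^1}\lesssim\mu^k$ plus Young's inequality; your explicit treatment of the fractional H\"older seminorm (splitting at $|h|\sim\mu^{-1}$) fills in the case $s\notin\mathbb{N}$ that the paper calls obvious.
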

\begin{proof}
\noindent\textsf{\underline{Part (a).}}
For the Littlewood--Paley pieces we observe the following:

\begin{itemize}
\item If $2^j \gg \mu$, then $\P_{2^j} u = 0$ since the Fourier support of $u$ is contained in $B(0,\mu)$.

\item If $2^j \lesssim \mu$, then from Lemma \ref{lem:proj} we have
\[
\|\P_{2^j} u\|_{L^\infty} \le \|u\|_{L^\infty}.
\]
\end{itemize}
This give us 
\begin{equation*}
    \|u\|_{\dot{B}^\alpha_{\infty,\infty}} \lesssim \mu^{s} \| \P_{2^j} u\|_{L^\infty}\lesssim\mu^{s}\|u\|_{L^\infty}
\end{equation*}
Let $\chi \in C_c^\infty$ be a smooth radial cutoff at scale $1$ and define $\chi_\mu(\xi) = \chi(\xi/\mu)$.  
Since $u$ has frequency support in $|\xi|\le \mu$, we can write
\begin{equation*}
    u = \left(\sum_{k \in \Z} \chi_\mu(k) e^{2\pi i kx}\right) \ast u.
\end{equation*}

For $|\alpha|\le s$, using Young's convolution inequality we have
\begin{equation*}
    \|\partial^\alpha u\|_{L^\infty} = \left\Vert \partial^\alpha\left(\sum_{k \in \Z} \chi_\mu(k) e^{2\pi i kx}\right)  \ast u\right\Vert_{L^\infty} \le \left\Vert \sum_{k \in \Z} (2\pi i k)^\alpha \chi_\mu(k) e^{2\pi i kx} \right\Vert_{L^1} \|u\|_{L^\infty}.
\end{equation*}
So it suffices to show that
\begin{equation}\label{eq:goal_est}
    \left\Vert \sum_{k \in \Z} (2\pi i k)^\alpha \chi_\mu(k) e^{2\pi i kx} \right\Vert_{L^1} \lesssim \mu^\alpha.
\end{equation}
First, computing the Fourier transform of $x^\alpha \chi_\mu(x)$ we have
$$
\left( x^\alpha \chi_\mu(x)\right)^{\wedge}(\xi) = \mu^{\alpha+1}\int_{\R} x^\alpha \chi(x) e^{2\pi i \mu x \xi}\, dx
$$
Now applying the Poisson summation formula yields
$$
\sum_{k \in \Z} (2\pi i k)^\alpha \chi_\mu(k) e^{-2\pi i kx} = \sum_{k \in \Z} (2\pi i)^\alpha \mu^{\alpha+1} \int_{\R} x^{\alpha} \chi(x) e^{2\pi i \mu (\xi+k)}\, dx.
$$
Applying the $L^1(\T)$ norm we have
$$
\left\Vert \sum_{k \in \Z} (2\pi i k)^\alpha \chi_\mu(k) e^{2\pi i kx} \right\Vert_{L^1} \lesssim \sum_{k \in \Z} \mu^{\alpha+1} \int_0^1 \left|\int_{\R}  x^{\alpha} \chi(x) e^{2\pi i \mu x(\xi+k)}\, dx\right|\, d\xi.
$$
Finally we perform the change of variables $\mu(\xi+k) \mapsto \xi$ to get
\begin{equation*}
    \begin{split}
        \left\Vert \sum_{k \in \Z} (2\pi i k)^\alpha \chi_\mu(k) e^{2\pi i kx} \right\Vert_{L^1} &\lesssim \sum_{k \in \Z} \mu^{\alpha} \int_{\mu k}^{\mu(k+1)} \left|\int_{\R}  x^{\alpha} \chi(x) e^{2\pi i x\xi}\, dx\right|\, d\xi\\
        &= \mu^\alpha \int_{\R} \left|\int_{\R}  x^{\alpha} \chi(x) e^{2\pi i \mu x\xi}\, dx\right|\, d\xi\\
        &\simeq \mu^\alpha \Vert x^\alpha \chi(x) \Vert_{L^1(\R)}\\
        &\lesssim \mu^\alpha.
    \end{split}
\end{equation*}
Now summing over $\alpha \leq s$ and using that $\mu \gg 1$ gives ~\eqref{eq:goal_est}. The modifications for the case $s \not \in \N$ are obvious.

\noindent\textsf{\underline{Part (b).}}
In this setup we observe $\P_{2^j}v \neq 0$ only when $\mu \simeq 2^j$. Then we can write 
\begin{equation*}
     \|v\|_{\dot{B}^\alpha_{\infty,\infty}}\simeq \mu^s\| \P_{2^j} v\|_{L^\infty} \lesssim \mu^s\|v\|_{L^\infty}.
\end{equation*}
For the first estimate in part (b), we have on the support of $\widehat{v}$ that $|k|\sim \mu$ and thus we conclude
\begin{equation*}
\|v\|_{\dot H^s}^2 = \sum_{\mu \le |k| \le 2\mu} |k|^{2s} |\hat v(k)|^2 
\lesssim \mu^{2s} \sum_{\mu \le |k| \le 2\mu} |\hat v(k)|^2 
\le \mu^{2s} \|v\|_{L^2}^2,
\end{equation*}

hence
\begin{equation*}
 \|v\|_{\dot H^s} \lesssim \mu^s \|v\|_{L^2}.
\end{equation*}

\end{proof}

\section{Convex Integration Scheme}\label{section3}
We present here a short outline of the argument to follow. We will consider the relaxed version of Equation ~\eqref{eq:stat_eqn} given by
\begin{equation}\label{eq:relaxed}
\begin{split}
    3\left(u^2\right)' - u^{(3)} = E'
    \end{split}
\end{equation}
for $E : \T \to \R$ which we refer to as the \textit{error}. We assume inductively that we have a smooth classical solution $(u_q,E_q)$ to ~\eqref{eq:relaxed} with $E_q$ not identically $0$.  If we put
\begin{equation}\label{eq:uq+1}
    u_{q+1} = u_q + w_{q+1}
\end{equation}
for some carefully chosen smooth function $w_{q+1}:\T \to \R$, then $E_{q+1}$ must satisfy
\begin{equation}\label{eq:relaxed_eqn}
    \begin{split}
        E_{q+1}' &= 3\left(u_{q+1}^2\right)' - u_{q+1}^{(3)}\\
        &= \left(E_q + 3w_{q+1}^2\right)' + \left(6w_{q+1} u_q\right)'  - w_{q+1}^{(3)}
    \end{split}
\end{equation}

Setting
\begin{equation}\label{eq:osc_error}
    E_O = E_q + 3w_{q+1}^2,
\end{equation}
\begin{equation}\label{eq:Nash_error}
    E_N = 6w_{q+1}u_q
\end{equation}
and
\begin{equation}\label{eq:dis_error}
    E_D = - w_{q+1}''
\end{equation}
we have that
\begin{equation}\label{eq:Rq+1}
    E_{q+1} = E_O + E_N + E_D + C_{q+1}
\end{equation}
for
\begin{equation}\label{eq:Cq+1}
    C_{q+1} =
    \begin{cases}
        1, & E_O + E_N + E_D = 0\\
        0, & \text{otherwise}
    \end{cases}.
\end{equation}
Equations ~\eqref{eq:osc_error} - ~\eqref{eq:dis_error} are referred to as the \textit{oscillation error}, \textit{Nash error}, and \textit{dispersion error}, respectively. We will show that for arbitrary $\alpha < 0$ and $1 \leq p < 2$, both $\Vert w_{q+1} \Vert_{\dot{B}^\alpha_{\infty,\infty}}$ and $\Vert w_{q+1} \Vert_{L^p}$ are summable, and so $u_q \to u \in \dot{B}^\alpha_{\infty,\infty} \cap L^p$. Since $\alpha$ and $p$ are chosen arbitrarily, we get the desired regularity of $u$. Then upon showing that the dispersion, Nash, and oscillation errors all tend to $0$ in $\dot{H}^{-s}$ norm as $q \to \infty$, this will show that $u$ is our desired solution to ~\eqref{eq:stat_eqn} in the sense of Definition \ref{def:weak_para_soln}. We note that the existence of $\mathbb{P}_{\not=0}(u^2)$ as a paraproduct follows nearly immediately from our computations showing that the oscillation error can be made arbitrarily small.

\section{Inductive Proposition}\label{Section4}
We state here the main Proposition \ref{prop:ind}, which we call the inductive proposition, and use it to prove Theorem \ref{thm:main_flex}. In Section \ref{section5} we prove the inductive proposition.

\begin{proposition}[\textbf{Inductive Proposition}]\label{prop:ind}
    There are sequences $u_q$ and $E_q$, $q \geq 0$, such that the following hold:
    \begin{enumerate}
        \item\label{i:1} $(u_q,E_q)$ is a smooth solution to the relaxed equation given by ~\eqref{eq:relaxed}. In addition, $u_q$ has zero mean and $E_q$ is not identically $0$;
        \item\label{i:2} $\Vert E_q \Vert_{\dot{H}^{-s}} < 2^{-q}$;
        \item\label{i:3} There is a constant $C(\alpha,p) > 0$ depending only on $\alpha < 0$ and $1 \leq p < 2$ such that for all $q' \leq q$ we have
        $$
        \Vert u_{q'} - u_{q'-1} \Vert_{\dot{B}^\alpha_{\infty,\infty}} + \Vert u_{q'} - u_{q'-1} \Vert_{L^p} \lesssim 2^{-C(\alpha,p)q'}
        $$
        where the implicit constant depends on $\alpha$ and $p$ but not $q$, or $q'$;
        \item\label{i:4} For all $q' \leq q$ we have that
        $$
        \Vert u_{q'} - u_{q'-1} \Vert_{L^2} \gtrsim 1
        $$
        where the implicit constant is independent of $q$ and $q'$;
        \item\label{i:5} For each $q' \leq q$ there exists a unique $j \in \Z$ such that
        $$
        \mathbb{P}_{2^j}(u_{q'} - u_{q'-1}) = u_{q'} - u_{q'-1};
        $$
        \item\label{i:6} There are $C_1,C_2 > 0$ independent of $q$ such that
        $$
        \sum_{\substack{n,m \leq q\\ n \not  = m}} \Vert (u_n - u_{n-1}) (u_m - u_{m-1}) \Vert_{\dot{H}^{-s}} < C_1 - 2^{-q}
        $$
        and
        $$
         \sum_{n \leq q} \Vert (u_n - u_{n-1})^2 \Vert_{\dot{H}^{-s}} < C_2 - 2^{-q + 100}.
        $$
    \end{enumerate}
\end{proposition}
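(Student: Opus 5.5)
We argue by induction on $q$. For the base case we take $u_0 = 0$ and a fixed smooth mean-zero $E_0$ with small $\dot H^{-s}$ norm, for example $E_0 = \frac12\cos(2\pi x)$ rescaled so that $\Vert E_0\Vert_{\dot H^{-s}} < 1$. Then $(u_0,E_0)$ is trivially a smooth solution of \eqref{eq:relaxed}, and items \eqref{i:3}--\eqref{i:6} are vacuous or trivially satisfied for $q=0$ if $C_1,C_2$ are chosen large.

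\textbf{Inductive step.} Given $(u_q,E_q)$, we define the perturbation
$$w_{q+1} = \mathbb{P}_{2^{j_{q+1}}}\big(a_{q+1}\,\rho_{\lambda_{q+1},\epsilon_{q+1}}\big)$$
with $\epsilon_{q+1}\to 0$ slowly and $\lambda_{q+1}$ a huge power of $2$. The amplitude $a_{q+1}$ is a smooth function built from $E_q$, for instance $a_{q+1} = \sqrt{(c_q - E_q)/3}$ with $c_q > \Vert E_q\Vert_{L^\infty}$ a constant. This choice gives
$$E_q + 3a_{q+1}^2 = c_q,$$
which is a constant and therefore invisible after differentiation and in $\dot H^{-s}$. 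The frequency projection onto a single shell yields \eqref{i:5}. Lemma \ref{lem:L^inf_decay_2} (together with a similar low-frequency estimate) shows that the projection costs only an arbitrarily small error, so the Fourier support sits near $\lambda_{q+1}$-scale shells. We should in fact project onto the shell containing the bulk of $\hat\rho$, near $\lambda^{\epsilon}\cdot\lambda^{1-\epsilon}=\lambda$.

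\textbf{Bounds on the increment.} By Lemma \ref{lem:boldW} and Lemma \ref{lem:proj}, $\Vert w_{q+1}\Vert_{L^p}\lesssim \Vert a_{q+1}\Vert_{L^\infty}\lambda_{q+1}^{(1-\epsilon_{q+1})(1/2-1/p)}$, which decays geometrically once $\lambda_{q+1}$ is chosen large relative to $2^{q}$. By Lemma \ref{lem:Bernstein}(b), $\Vert w_{q+1}\Vert_{\dot B^{\alpha}_{\infty,\infty}}\lesssim \lambda_{q+1}^{\alpha}\Vert w_{q+1}\Vert_{L^\infty}\lesssim \lambda_{q+1}^{\alpha+1/2}$. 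This last bound is not good enough for $\alpha$ close to $0$, and this is exactly why we use small intermittency $\epsilon_{q+1}$: the $L^\infty$ norm is then $\lambda^{(1-\epsilon)/2}$, yet by Lemma \ref{lem:fourier} the mass spreads across $\sim\lambda^{1-\epsilon}$ Fourier modes. On a single dyadic shell we should bound $\Vert\mathbb{P}_{2^j}w\Vert_{L^\infty}$ by the $\ell^1$ norm of the coefficients, roughly $\lambda^{(1-\epsilon)/2}$ again. So the honest route is to take the intermittency parameter so that the Besov gain $\lambda^{\alpha}$ beats the $L^\infty$ loss, and to let the intermittency degenerate in a controlled way as $q\to\infty$ so that every $\alpha<0$ is eventually covered. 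I expect this exponent bookkeeping (to get both \eqref{i:3} for all $\alpha<0,p<2$ and \eqref{i:4}) to be delicate. Item \eqref{i:4} follows from Lemma \ref{lem:L2_norm}: $\Vert w_{q+1}\Vert_{L^2}^2 \approx \int a_{q+1}^2 = (c_q - \int E_q)/3 \gtrsim 1$ if $c_q\ge 1$.

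\textbf{Error estimates.} The errors are split as in \eqref{eq:Rq+1}. The dispersion error satisfies $\Vert E_D\Vert_{\dot H^{-s}}\lesssim \lambda_{q+1}^{2-s}\Vert w_{q+1}\Vert_{L^2}$ by Lemma \ref{lem:Bernstein}(b), which is small for $s>5/2$ (one may use $L^1$ and sharper accounting). The Nash error $6w_{q+1}u_q$ is estimated with Lemma \ref{lem:pseudo_diff_cont} as $\lesssim\Vert u_q\Vert_{C^s}\lambda_{q+1}^{-s}$, which is small because $u_q$ is fixed before $\lambda_{q+1}$ is chosen. The oscillation error is the main obstacle. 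We write
$$3w_{q+1}^2 - 3a_{q+1}^2 = 3a_{q+1}^2\left(\rho^2-1\right) + (\text{projection errors}).$$
Since $\rho^2-1$ is mean zero up to $O(\lambda^{\epsilon-1})$ by Lemma \ref{lem:L2_norm} and is $\lambda^{-\epsilon}$-periodic, its $\dot H^{-s}$ norm is $\lesssim\lambda^{-\epsilon s}\Vert\rho^2\Vert_{L^2}\lesssim \lambda^{-\epsilon s}\lambda^{(1-\epsilon)/2}$. This requires $\epsilon s>1/2$, that is $s$ large relative to $1/\epsilon_{q+1}$. That is in tension with $\epsilon_{q+1}\to0$, so I would fix $\epsilon$ per step and only require summability, or exploit the mean-zero structure more carefully. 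Lemma \ref{lem:pseudo_diff_cont} then handles multiplication by $a_{q+1}^2$, and Lemma \ref{lem:L^inf_decay_2} handles the projection errors. If all the errors vanish we set $C_{q+1}=1$ to keep $E_{q+1}\not\equiv0$.

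\textbf{Item \eqref{i:6}.} This follows by choosing $\lambda_{q+1}$ so large that the new cross terms $w_{q+1}(u_n-u_{n-1})$ are high-low products with $\dot H^{-s}$ norm below $2^{-q-2}$ (again by Lemma \ref{lem:pseudo_diff_cont}). The diagonal term $\Vert w_{q+1}^2\Vert_{\dot H^{-s}}$ equals $\Vert 3^{-1}(c_q - E_q)\Vert_{\dot H^{-s}}$ plus small terms. It is summable because the mean part is dropped, $\mathbb{P}_{\neq0}$ kills $c_q$, and $\Vert E_q\Vert_{\dot H^{-s}}<2^{-q}$ by \eqref{i:2}.
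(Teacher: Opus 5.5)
\textbf{There are two genuine gaps: the single-shell increment, and the direction of the intermittency parameter.}

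\textbf{The single-shell increment.} You claim that projecting $a_{q+1}\rho_{\lambda_{q+1},\epsilon_{q+1}}$ onto one dyadic shell costs only an arbitrarily small error. This is false. By Lemma~\ref{lem:fourier}, the spectrum of $\rho_{\lambda,\epsilon}$ sits at the points $\lambda^{\epsilon}n$ with coefficients $\lambda^{\frac{\epsilon-1}{2}}\hat\phi(\lambda^{\epsilon-1}n)$. So it follows the fixed profile $\hat\phi(\xi/\lambda)$ across every shell between $\lambda^{\epsilon}$ and $\lambda$.

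Since $\hat\phi$ is not supported in any annulus, $\Vert\mathbb{P}_{2^j}\rho_{\lambda,\epsilon}\Vert_{L^2}^2$ tends to a constant $c<1$ independent of $\lambda$. Lemma~\ref{lem:L^inf_decay_2} only controls frequencies above $\lambda^2$, and there is no analogous low-frequency estimate. Consequently the low-frequency part of $E_q+3w_{q+1}^2$ is $(1-c)E_q+c\,c_q$, not $c_q$. Only a fraction of $E_q$ is cancelled, and nothing in your argument makes that fraction close to $1$.

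Moreover, $\mathbb{P}_{2^j}$ is not idempotent ($\varphi_j^2\neq\varphi_j$). Applying it therefore does not give item~\ref{i:5}, which requires $\hat w_{q+1}$ to lie where $\varphi_j\equiv1$.

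The paper avoids both problems with a different device. It truncates only at $\lambda_{q+1}^2$, which loses negligible mass by Lemma~\ref{lem:L^inf_decay_2}. It then multiplies by $\cos(2\pi\sigma_{q+1}x)$ with $\sigma_{q+1}=\frac54\lambda_{q+1}^3$. This translates the entire spectrum into $\{\lambda_{q+1}^3\le|\xi|\le\frac{12}{7}\lambda_{q+1}^3\}$ with no loss of mass. The identity $\cos^2\theta=\frac12+\frac12\cos2\theta$ produces the factor absorbed by $\sqrt{2/3}$ in \eqref{eq:wq+1}. The leftover $[\mathbb{P}_{\le\lambda_{q+1}^2}(a_q\rho_{q+1})]^2\cos(4\pi\sigma_{q+1}x)$ is $O(\lambda_{q+1}^{-s+1-\epsilon_{q+1}})$ in $\dot H^{-s}$ by Lemmas~\ref{lem:pseudo_diff_cont} and~\ref{lem:Bernstein}. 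One could perhaps salvage a single-shell block by cutting off smoothly inside the plateau of $\varphi_j$ and dividing by the captured $L^2$ fraction. The oscillation computation would then have to be redone for the projected block.

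\textbf{The intermittency parameter.} You use the parametrization of Lemma~\ref{lem:boldW}, where $\Vert\rho_{\lambda,\epsilon}\Vert_{L^\infty}\lesssim\lambda^{(1-\epsilon)/2}$. In it, $\epsilon_{q+1}\to0$ means \emph{maximal} intermittency. Both tensions you flag are then real obstructions:
\begin{enumerate}
\item $\Vert w_{q+1}\Vert_{\dot B^{\alpha}_{\infty,\infty}}\lesssim\lambda_{q+1}^{\alpha+\frac{1-\epsilon_{q+1}}{2}}$ fails to decay for $-\frac12<\alpha<0$.
\item The bound $\lambda^{-s\epsilon}\lambda^{\frac{1-\epsilon}{2}}$ for $a^2\mathbb{P}_{\neq0}(\rho^2)$ decays only if $\epsilon_{q+1}>\frac{1}{2s+1}$. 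This cannot persist as $\epsilon_{q+1}\to0$, because $s$ is fixed in item~\ref{i:2}.
\end{enumerate}
Keeping $\epsilon$ fixed does not rescue item~\ref{i:3} either: the increments are then summable in $\dot B^{\alpha}_{\infty,\infty}$ only for $\alpha<-\frac{1-\epsilon}{2}$.

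The missing choice is $\epsilon_q\to1$, i.e.\ intermittency that degenerates; the paper takes $\epsilon_q=1-2^{-q-1000}$. Then:
\begin{enumerate}
\item $\epsilon_{q+1}>\frac{1}{2s+1}$ holds automatically.
\item The Besov exponent $\alpha+\frac{1-\epsilon_{q+1}}{2}$ drops below $\frac{\alpha}{2}$ for all $q$ past a threshold $q(\alpha)$, and the finitely many earlier steps go into the constant.
\item The $L^p$ exponent $(1-\epsilon_{q+1})(\frac12-\frac1p)$ stays negative for $p<2$, and decay follows because $\lambda_{q+1}$ is chosen after $\epsilon_{q+1}$.
\end{enumerate}

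\textbf{Minor: the base case.} With $u_0=0$ the relaxed equation forces $E_0'=0$, so $E_0=\frac12\cos(2\pi x)$ is not admissible, though a nonzero constant is. Also, items~\ref{i:4}--\ref{i:5} fail at $q'=0$ if $u_0=u_{-1}=0$.
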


\begin{proof}[Proof of Theorem~\ref{thm:main_flex} using Proposition~\ref{prop:ind}] We first verify the base case of the induction proposition. Put $u_0 = A\sin(x)$ and $R_0 = 3A^2\sin^2(x) + A\sin(x)$ for some $A > 0$ to be chosen later. Then clearly \ref{i:1}, \ref{i:4}, and \ref{i:5} (with $u_{-1} = 0$\footnote{If one desired a solution with mean $m$ simply setting $u_{-1} = m$ and then proceeding with the rest of the scheme will produce the desired solution. Since the mean of solutions of KdV are conserved in time, in some sense it suffices to only consider mean zero solutions.}) hold. \ref{i:2} and \ref{i:3} hold upon choosing $A$ small enough. Finally \ref{i:6} holds by choosing $C_1$ and $C_2$ large enough.

Now we assume Proposition \ref{prop:ind} holds. \ref{i:3} implies that $\{u_q\}$ is Cauchy in both $L^p$ and $\dot{B}^\alpha_{\infty,\infty}$ for all $p < 2$ and $\alpha < 0$, and thus
$$
u_q \to u \in \bigcap_{\epsilon > 0} \dot{B}^{-\epsilon}_{\infty,\infty} \cap L^{2-\epsilon}.
$$
From \ref{i:5}, the frequency support of each $u_q - u_{q-1}$ is disjoint, and thus from \ref{i:4} we have
$$
\Vert u \Vert_{L^2}^2 \geq \Vert u_0 \Vert_{L^2}^2 + \sum_{q' \leq q} \Vert u_{q'} - u_{q'-1} \Vert_{L^2}^2 \gtrsim \Vert u_0 \Vert_{L^2}^2 + q.
$$
Sending $q \to \infty$ shows that $u \not \in L^2$.

Now we show that $u$ solves ~\eqref{eq:stat_eqn} in the sense of Definition \ref{def:weak_para_soln}. By \ref{i:1}, we have that
\begin{equation}\label{eq:weak_form_relaxed}
    -\int_{\T} u_q \psi^{(3)} + 3\int_{\T} u_q^2 \psi' = -\int_\T E_q \psi'
\end{equation}
for all $\psi \in C^\infty(\T)$. From \ref{i:2}, sending $q \to \infty$ we get
\begin{equation}\label{eq:Eq_to_0}
    \left|\int_{\T} E_q \psi' \right| \lesssim \Vert E_q \Vert_{\dot{H}^{-s}} \Vert \psi' \Vert_{\dot{H}^s} < 2^{-q} \Vert \psi' \Vert_{\dot{H}^s} \to 0.
\end{equation}
Similarly since $u_q \to u$ in $L^1$ we have
\begin{equation}\label{eq:nonlin_term}
    \int_\T u_q \psi^{(3)} \to \int_\T u \psi^{(3)} = \langle u,\psi^{(3)}\rangle_{\dot{H}^{-s'},\dot{H}^{s'}}
\end{equation}
for all $s' > 0$. Finally combining \ref{i:5} and \ref{i:6} shows that $\mathbb{P}_{\not=0}(u^2)$ exists in the sense of Definition \ref{def:paras}. Thus
\begin{equation}\label{eq:non_lin_term}
    \begin{split}
        \int_\T u_q^2 \psi' &= \int_\T \mathbb{P}_{\not=0}(u_q^2) \psi'\\
        &= \int_\T \left(\sum_{q',q'' \leq q} \mathbb{P}_{\not=0}\left((u_{q'} - u_{q'-1})(u_{q''} - u_{q''-1})\right)\right)\psi'\\
        &= \left\langle \sum_{q',q'' \leq q} \mathbb{P}_{\not=0}\left((u_{q'} - u_{q'-1})(u_{q''} - u_{q''-1})\right),\psi'\right\rangle_{\dot{H}^{-s},\dot{H}^s}\\
        &\to \langle \mathbb{P}_{\not=0}(u^2), \psi'\rangle_{\dot{H}^{-s},\dot{H}^s}
    \end{split}
\end{equation}
Combining ~\eqref{eq:weak_form_relaxed}, ~\eqref{eq:Eq_to_0}, ~\eqref{eq:nonlin_term}, and ~\eqref{eq:non_lin_term} shows $u$ is our desired solution.

\end{proof}

\section{Proof of Proposition \ref{prop:ind}}\label{section5}
\subsection{Parameters}\label{sec:param}
Here we specify the parameters that we will utilize throughout this section. First define the sequence $\{\epsilon_q\}_{q \in \N}$ by
$$
\epsilon_q = 1 - 2^{-q-1000}.
$$
Next we assume that $\{\lambda_q\}_{q \in \N}$ is a monotonically increasing sequence of very large powers of $2$ such that $\lambda_q^{\epsilon_q}$ is also an integer for all $q$. The size of $\lambda_{q+1}$ will be chosen to ensure that \eqref{eq:sigma}, \eqref{eq:dis_error_est}, \eqref{eq:Nash_error_est}, \eqref{eq:second_term_osc_error}, \eqref{eq:osc_first_term_est}, \eqref{eq:osc_second_term_est}, \eqref{eq:osc_third_term_est}, \eqref{eq_itm4}, \eqref{eq_itm4_2}, \eqref{eq:freq_cutoff_ineq}, \eqref{eq:RL_lemma}, \eqref{eq:shell_cond}, \eqref{eq:off_diag_est1} are all satisfied. Finally define $\{\sigma_q\}_{q \in \N}$ by
\begin{equation}\label{eq:sigma}
    \sigma_q = \frac{5}{4}\lambda_q^3 \in \Z.
\end{equation}
 Note we can ensure $\sigma_q \in \Z$ by requiring that $\lambda_q$ is a large enough power of $2$.

\subsection{Construction of Increment}

\begin{definition}[\textbf{Increment}]\label{def:wq+1}
    Put $\rho_{q+1} = \rho_{\lambda_{q+1},\epsilon_{q+1}}$ from Lemma \ref{lem:boldW} and let
    \begin{equation}\label{eq:wq+1}
        w_{q+1} = \sqrt{\frac{2}{3}}\mathbb{P}_{\leq \lambda_{q+1}^2} \left(a_{q}(x) \rho_{q+1}(x)\right) \cos(2\pi \sigma_{q+1} x)
    \end{equation}
    where we put
    \begin{equation}\label{eq:ak}
        a_{q} = \left(2\Vert E_q \Vert_{L^\infty} - E_q\right)^{1/2}.
    \end{equation}
\end{definition}



\begin{lemma}[\textbf{$L^p$ estimates for $w_{q+1}$}]\label{lem:wq+1_est}
    For all $1 \leq p \leq \infty$ we have
    $$
    \Vert w_{q+1} \Vert_{L^p} \lesssim \lambda_{q+1}^{(1-\epsilon_{q+1})\left(\frac{1}{2}-\frac{1}{p}\right)}.
    $$
\end{lemma}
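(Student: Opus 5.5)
The estimate follows by peeling off the factors of $w_{q+1}$ one at a time. Throughout, the implicit constant may depend on $\Vert E_q\Vert_{L^\infty}$, and hence on $a_q$; this is harmless because $E_q$ is fixed before $\lambda_{q+1}$ is chosen. Such a dependence is already implicit in Definition \ref{def:wq+1}.

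First, since $|\cos(2\pi\sigma_{q+1}x)|\le 1$, we have
$$
\Vert w_{q+1}\Vert_{L^p}\le \sqrt{\tfrac{2}{3}}\,\bigl\Vert \mathbb{P}_{\leq \lambda_{q+1}^2}(a_q\rho_{q+1})\bigr\Vert_{L^p}.
$$
Next, Lemma \ref{lem:proj} says that $\mathbb{P}_{\leq \lambda_{q+1}^2}$ is bounded on $L^p$ for all $1\le p\le\infty$, with operator norm independent of the cutoff. This gives the bound $\lesssim \Vert a_q\rho_{q+1}\Vert_{L^p}$.

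Then Hölder's inequality gives $\Vert a_q\rho_{q+1}\Vert_{L^p}\le \Vert a_q\Vert_{L^\infty}\Vert\rho_{q+1}\Vert_{L^p}$. By \eqref{eq:ak},
$$
\Vert a_q\Vert_{L^\infty}\le (3\Vert E_q\Vert_{L^\infty})^{1/2},
$$
which is finite because $E_q$ is smooth; this constant is independent of $\lambda_{q+1}$.

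Finally, item \eqref{w:3} of Lemma \ref{lem:boldW}, applied with $\lambda=\lambda_{q+1}$ and $\epsilon=\epsilon_{q+1}$, gives
$$
\Vert\rho_{q+1}\Vert_{L^p}\lesssim \lambda_{q+1}^{(1-\epsilon_{q+1})(\frac12-\frac1p)}.
$$
Chaining these four bounds proves the lemma.

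The only point that needs care is uniformity. The implicit constant in Lemma \ref{lem:boldW} must not depend on $\epsilon_{q+1}$ in a way that degenerates as $\epsilon_{q+1}\to 1$, and the projector bound must hold at the endpoints $p=1$ and $p=\infty$, where it is not a Fourier-multiplier-on-$L^2$ statement. The endpoint issue is covered by Lemma \ref{lem:proj}, which holds for all $1\le p\le\infty$.

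For the uniformity in $\epsilon$, I would verify \eqref{w:3} directly from \eqref{eq:rho}:
\begin{itemize}
\item the bumps $\lambda^{\frac{1-\epsilon}{2}}\phi(\lambda x+\lambda^{1-\epsilon}n)$ have disjoint supports of width about $\lambda^{-1}$;
\item there are $\lambda^{\epsilon}$ of them per period;
\item hence $\Vert\rho\Vert_{L^p}^p\approx \lambda^{\epsilon}\cdot\lambda^{\frac{(1-\epsilon)p}{2}}\lambda^{-1}\Vert\phi\Vert_{L^p}^p$.
\end{itemize}
This yields the stated power with constant $\Vert\phi\Vert_{L^p}$, uniformly in $\epsilon$. (The $p=\infty$ case is the same computation without the $p$th powers.)
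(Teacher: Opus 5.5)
Your proof is correct and is the paper's argument: bound the cosine by $1$, use the $\lambda$-uniform $L^p$ bound for $\mathbb{P}_{\leq \lambda_{q+1}^2}$ from Lemma \ref{lem:proj}, apply H\"older with $\Vert a_q\Vert_{L^\infty}\le(3\Vert E_q\Vert_{L^\infty})^{1/2}$ treated as a constant independent of $\lambda_{q+1}$ (as in the paper's accompanying remark), and conclude with item (3) of Lemma \ref{lem:boldW}. Your direct check of that item from \eqref{eq:rho} is a useful addition, since the paper defers it to the literature; strictly, the bumps are disjoint only once $\lambda^{1-\epsilon}\ge 2$, but because $\lambda^{\epsilon-1}<1$ at most two of them overlap at any point, so the constant is uniform in $\epsilon$ regardless.
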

\begin{proof}
    Upon applying Lemma \ref{lem:proj}, H\"{o}lder inequality, Lemma \ref{lem:boldW}, and ~\eqref{eq:wq+1} we have
    $$
    \Vert w_{q+1} \Vert_{L^p} \lesssim \Vert \rho_{q+1} \Vert_{L^p} \lesssim \lambda_{q+1}^{(1-\epsilon_{q+1})\left(\frac{1}{2} - \frac{1}{p}\right)}
    $$
    establishing the lemma.
\end{proof}
\begin{remark}
    Note in the proof of Lemma \ref{lem:wq+1_est} we used that $\Vert a_q \Vert_{L^\infty} \lesssim 1$. This is due to the fact that this norm is independent of the parameter $\lambda_{q+1}$ (see ~\eqref{eq:ak}), and so any large constants which depend only on $\lambda_1,\ldots, \lambda_q$ can be dominated by choosing $\lambda_{q+1}$ large enough so long as there is decay in this parameter $\lambda_{q+1}$. We will perform this procedure quite often throughout the rest of the text; any constants which only depend on $\lambda_q$ will be estimated by $1$.
\end{remark}

\subsection{Proof of Item \ref{i:1}}
Let us suppose $u_q$ is smooth and has mean zero. Then from ~\eqref{eq:uq+1} and Definition \ref{def:wq+1} we see $u_{q+1}$ is also smooth and has zero mean. From our choice of $C_{q+1}$ in ~\eqref{eq:Cq+1}, we deduce $E_{q+1}$ is not identically $0$. From ~\eqref{eq:relaxed_eqn} we see that $E_{q+1}$ is smooth and $(u_{q+1},E_{q+1})$ solve ~\eqref{eq:relaxed}.

\subsection{Proof of Item \ref{i:2}}

\noindent\texttt{Dispersion error: }

Since 
\begin{equation*}
    E_D=-w_{q+1}''
\end{equation*}
we have on Fourier side 
\begin{equation*}
\widehat{E}_D(k) = - (2\pi i k)^2 \, \widehat{w}_{q+1}(k) = (2\pi)^2 k^2 \, \widehat{w}_{q+1}(k).
\end{equation*}
Then using Lemma \ref{lem:wq+1_est} and our assumption $s > 0$ is large enough (larger than $5/2$ suffices) we have
\begin{equation}\label{eq:dis_error_est}
    \|E_D \|^2_{\dot{H}^{-s}(\mathbb{T})} \simeq \sum_{k\neq 0} |k|^{-2s}|k|^4|\widehat{w}_{q+1}(k)|^2\lesssim \lambda_{q+1}^{\epsilon_{q+1}-1} < 2^{-2q-200}
\end{equation}
where the final estimate in ~\eqref{eq:dis_error_est} holds by choosing $\lambda_{q+1}$ large enough.

\noindent\texttt{Nash error: }

Recall \eqref{eq:Nash_error} and the Sobolev embedding $L^1 \subset \dot{H}^{-s}$ for $s > 1/2$ (recall we choose $s$ large). Then using Lemma \ref{lem:wq+1_est} we have
\begin{equation}\label{eq:Nash_error_est}
\| E_N \|_{\dot H^{-s}} \lesssim \| E_N \|_{L^1} 
\lesssim \| w_{q+1} \|_{L^1} \, \| u_q \|_{L^\infty}\lesssim\lambda_{q+1}^{(1-\epsilon_{q+1})\left(-\frac{1}{2}\right)} < 2^{-q-100} .
\end{equation} 
where once again the final inequality in ~\eqref{eq:Nash_error_est} follows from choosing $\lambda_{q+1}$ large enough.

\noindent\texttt{Oscillation error: }

We start with the expression from Definition \ref{def:wq+1}
\begin{equation*}
     \sqrt{\frac{3}{2}}w_{q+1} = \mathbb{P}_{\leq \lambda_{q+1}^2} (a_q \rho_{q+1}) \cos(2 \pi \sigma_{q+1} x).
\end{equation*}

Using the trigonometric identity \(\cos^2 \theta = \frac{1}{2} + \frac{1}{2} \cos(2 \theta)\), we write
\begin{equation*}
    \frac{3}{2} w_{q+1}^2 = \left[\mathbb{P}_{\leq \lambda_{q+1}^2} (a_q \rho_{q+1}) \right]^2 \left[\frac{1}{2} + \frac{1}{2} \cos(4 \pi \sigma_{q+1} x)\right].
\end{equation*}

Multiplying both sides by 2, we get
\begin{equation*}
    3 w_{q+1}^2 = \left[\mathbb{P}_{\leq \lambda_{q+1}^2} (a_q \rho_{q+1}) \right]^2 + \left[\mathbb{P}_{\leq \lambda_{q+1}^2} (a_q \rho_{q+1}) \right]^2 \cos(4 \pi \sigma_{q+1} x)
\end{equation*}
and hence we can write 
\begin{equation}\label{eq:osc_error_decomp}
    E_q+3 w_{q+1}^2 = \left( E_q+\left[\mathbb{P}_{\leq \lambda_{q+1}^2} (a_q \rho_{q+1}) \right]^2 \right) + \left[\mathbb{P}_{\leq \lambda_{q+1}^2} (a_q \rho_{q+1}) \right]^2 \cos(4 \pi \sigma_{q+1} x).
\end{equation}
Let us start with the second term on the right had side of ~\eqref{eq:osc_error_decomp}. Here we invoke Lemma \ref{lem:pseudo_diff_cont} and Lemma \ref{lem:Bernstein} to get
\begin{equation*}
    \begin{split}
        \left\| \left[\mathbb{P}_{\leq \lambda_{q+1}^2} (a_q \rho_{q+1}) \right]^2 \cos(4 \pi \sigma_{q+1} x) \right\|_{{\dot H^{-s}}} &\lesssim \left\|\left[\mathbb{P}_{\leq \lambda_{q+1}^2} (a_q \rho_{q+1}) \right]^2 \right\|_{C^s} \| \cos(4\pi \sigma_{q+1}x)\|_{{\dot H^{-s}}}\\
        &\lesssim \lambda_{q+1}^{2s} \lambda_{q+1}^{2(1-\epsilon_{q+1})\frac{1}{2}} \; \lambda_{q+1}^{-3s}\\
    &\lesssim \lambda_{q+1}^{-s+1-\epsilon_{q+1}}.
    \end{split}
\end{equation*}
Since $s > 0$ is large, for $\lambda_{q+1}$ large enough we have that
\begin{equation}\label{eq:second_term_osc_error}
    \left\| \left[\mathbb{P}_{\leq \lambda_{q+1}^2} (a_q \rho_{q+1}) \right]^2 \cos(4 \pi \sigma_{q+1} x) \right\|_{{\dot H^{-s}}} < 2^{-q-1000}.
\end{equation}

Recall from Lemma \ref{lem:L2_norm} (specifically ~\eqref{eq:square_ofL2_norm}) we have that
$$
\mathbb{P}_{=0}\left(\rho_{q+1}^2\right) = 1 + O\left(\lambda_{q+1}^{\epsilon_{q+1}-1}\right).
$$
So let us define $|\tilde{C}_{q+1}| \lesssim 1$ such that
\begin{equation}\label{eq:tilde_Cq+1}
    \mathbb{P}_{=0}\left(\rho_{q+1}^2\right) = 1 + \tilde{C}_{q+1}\lambda_{q+1}^{\epsilon_{q+1}-1}.
\end{equation}
Now analyzing the first term on the right hand side of ~\eqref{eq:osc_error_decomp}, we write

\begin{equation}\label{eq:Error_cancellation}
    \begin{split}
    E_q+\left[\mathbb{P}_{\leq \lambda_{q+1}^2} (a_q \rho_{q+1}) \right]^2  &= E_q + \left[a_q\rho_{q+1}-\mathbb{P}_{> \lambda_{q+1}^2} (a_q \rho_{q+1})\right]^2\\
    &= E_q + (a_q \rho_{q+1})^2 - 2 \, a_q \rho_{q+1} \, \mathbb{P}_{> \lambda_{q+1}^2} (a_q \rho_{q+1}) + \left[\mathbb{P}_{> \lambda_{q+1}^2} (a_q \rho_{q+1})\right]^2\\
    &=E_q+(2\|E_q\|_{L^\infty}-E_q )[1 + \tilde{C}_{q+1}\lambda_{q+1}^{\epsilon_{q+1}-1} +\mathbb{P}_{\neq 0} (\rho_{q+1}^2) ]\\
    &\qquad- 2 \, a_q \rho_{q+1} \, \mathbb{P}_{> \lambda_{q+1}^2} (a_q \rho_{q+1}) + \left[\mathbb{P}_{> \lambda_{q+1}^2} (a_q \rho_{q+1})\right]^2\\
    &=E_q + (2\|E_q\|_{L^\infty}-E_q ) + \left(\tilde{C}_{q+1} \lambda_{q+1}^{\epsilon_{q+1}-1} + 1\right) a_q^2 \mathbb{P}_{\not=0}(\rho_{q+1}^2)\\
    &\qquad- 2 \, a_q \rho_{q+1} \, \mathbb{P}_{> \lambda_{q+1}^2} (a_q \rho_{q+1}) + \left[\mathbb{P}_{> \lambda_{q+1}^2} (a_q \rho_{q+1})\right]^2\\
    &=  2\|E_q\|_{L^\infty}+\left(\tilde{C}_{q+1}\lambda_{q+1}^{\epsilon_{q+1}-1}+1\right) a_q^2 \P_{\neq0}({\rho_{q+1}^2})\\
    &\qquad- 2 \, a_q \rho_{q+1} \, \mathbb{P}_{> \lambda_{q+1}^2} (a_q \rho_{q+1}) + \left[\mathbb{P}_{> \lambda_{q+1}^2} (a_q \rho_{q+1})\right]^2
    \end{split}
\end{equation}
Let us take the homogeneous Sobolev norm of ~\eqref{eq:Error_cancellation} and analyze in three parts.
\begin{itemize}
    \item 
    The first term we analyze will be
    \begin{equation*}
        2\Vert E_q \Vert_{L^\infty} + \left(\tilde{C}_{q+1} \lambda_{q+1}^{\epsilon_{q+1}-1} + 1\right) a_q^2 \P_{\neq0}({\rho_{q+1}^2}).
    \end{equation*}
    Applying the $\dot{H}^{-s}$ norm we see the $2\Vert E_q \Vert_{L^\infty}$ term vanishes since it is constant. So making use of ~\eqref{eq:tilde_Cq+1}, Lemma \ref{lem:pseudo_diff_cont}, and Lemma \ref{lem:wq+1_est} we have
\begin{equation}\label{eq:osc_first_term}
    \begin{split}
    \left \|2 \Vert E_q \Vert_{L^\infty} +  \left(\tilde{C}_{q+1} \lambda_{q+1}^{\epsilon_{q+1}-1} + 1\right)a_ q^2 \P_{\neq0}({\rho_{q+1}^2}) \right\|_{\dot{H}^{-s}} 
    &\lesssim \| a_q\|_{C^s} \| \P_{\neq0}({\rho_{q+1}^2})\|_{\dot{H}^{-s}(\mathbb{T})} 
    \\
    &\lesssim \| \P_{\neq 0}(\rho_{q+1}^2) \|_{\dot{H}^{-s}}\\
    &\lesssim \lambda_{q+1}^{-s\epsilon_{q+1}}
    \|\rho_{q+1}^2\|_{L^2}\\
    &\lesssim \lambda_{q+1}^{-s\epsilon_{q+1}} \|\rho_{q+1}\|^2_{L^4}\\
    &\lesssim \lambda_{q+1}^{-s\epsilon_{q+1}}\lambda_{q+1}^{\frac{\1-\epsilon_{q+1}}{2}}.
    \end{split}
\end{equation}
We have decay in ~\eqref{eq:osc_first_term} so long as
$$
\frac{1}{2s+1} < \epsilon_{q+1}.
$$
From our choices of $s > 5/2$ and $\epsilon_{q+1} = 1 - 2^{-q-1001}$ this is clearly satisfied. Hence we may take $\lambda_{q+1}$ large enough to ensure that
\begin{equation}\label{eq:osc_first_term_est}
    \left \|2 \Vert E_q \Vert_{L^\infty} +  \left(\tilde{C}_{q+1} \lambda_{q+1}^{\epsilon_{q+1}-1} + 1\right)a_ q^2 \P_{\neq0}({\rho_{q+1}^2}) \right\|_{\dot{H}^{-s}} < 2^{-q-1000}.
\end{equation}

\item The second term we analyze is
$$
\left[\mathbb{P}_{> \lambda_{q+1}^2} (a_q \rho_{q+1})\right]^2,
$$
and for this we invoke Lemma \ref{lem:L^inf_decay_2} to get
\begin{equation}\label{eq:osc_second_term_est}
    \begin{split}
        \left\|[\mathbb{P}_{> \lambda_{q+1}^2} (a_q \rho_{q+1}) ]^2\right\|_{\dot{H}^{-s}(\mathbb{T})} &\lesssim \left\| [\mathbb{P}_{> \lambda_{q+1}^2} (a_q \rho_{q+1}) ]^2\right\|_{L^1}\\
    &\lesssim \left\| \mathbb{P}_{> \lambda_{q+1}^2} (a_q \rho_{q+1}) \right\|^2_{L^\infty}\\
    &< 2^{-q-1000}
    \end{split}
\end{equation}
where the final inequality in ~\eqref{eq:osc_second_term_est} holds for large enough $\lambda_{q+1}$.

\item For the last term again using Lemma \ref{lem:L^inf_decay_2} we have
\begin{equation}\label{eq:osc_third_term_est}
    \begin{split}
        \left\|a_q \rho_{q+1} \, \mathbb{P}_{> \lambda_{q+1}^2} (a_q \rho_{q+1}) \right\|_{\dot{H}^{-s}} &\lesssim \left\|a_q \rho_{q+1} \, \mathbb{P}_{> \lambda_{q+1}^2} (a_q \rho_{q+1}) \right\|_{L^1}\\
    &\lesssim \|a_q\rho_{q+1}\|_{L^2} \; \| \P_{>\lambda_{q+1}^2}(a_q\rho_{q+1}) \|_{L^2}\\
    &\lesssim  \|a_q\|_{L^\infty} \|\rho_{q+1}\|_{L^2} \| \P_{>\lambda_{q+1}^2}(a_q\rho_{q+1}) \|_{L^2}\\
    &\lesssim \| \P_{>\lambda_{q+1}^2}(a_q\rho_{q+1}) \|_{L^2}\\
    &< 2^{-q-1000}.
    \end{split}
\end{equation}
where again the final estimate in ~\eqref{eq:osc_third_term_est} requires $\lambda_{q+1}$ to be chosen large enough.
\end{itemize}

Hence combining ~\eqref{eq:Rq+1}, ~\eqref{eq:osc_error_decomp}, ~\eqref{eq:second_term_osc_error}, ~\eqref{eq:Error_cancellation}, ~\eqref{eq:osc_first_term_est}, ~\eqref{eq:osc_second_term_est}, and ~\eqref{eq:osc_third_term_est} we arrive at
\begin{equation}\label{eq:osc_error_est}
    \Vert R_O \Vert_{\dot{H}^{-s}} < 4\left(2^{-q-1000}\right) < 2^{-q-100}.
\end{equation}

And so finally from ~\eqref{eq:dis_error_est}, ~\eqref{eq:Nash_error_est}, and ~\eqref{eq:osc_error_est} we conclude
$$
\Vert E_{q+1} \Vert_{\dot{H}^{-s}} < 3\left(2^{-q-100}\right) < 2^{-q-1} 
$$
which completes the induction.

\subsection{Proof of Item \ref{i:3}}  
From Lemma \eqref{lem:wq+1_est} we have 
    \begin{equation}\label{eq_itm4}
        \| u_{q+1}-u_{q}\|_{L^p}= \| w_{q+1}\|_{L^p} \lesssim \lambda_{q+1}^{(1-\epsilon_{q+1})\left(\frac{1}{2} - \frac{1}{p}\right)} <2^{-C'(p)q}
    \end{equation}
for $C'(p)=\frac{1}{2}-\frac{1}{p}$ and large enough $\lambda_{q+1}$. Now suppose $q(\alpha)$ is the final index such that 
\begin{equation*}
    \alpha+\frac{1-\epsilon_{q(\alpha)}}{2} \geq \frac{\alpha}{2}.
\end{equation*}

We divide the analysis into two cases:
\begin{itemize}
    \item $q+1\leq q(\alpha)$, then we have
    \begin{equation}\label{eq_itm4_1}
        \|u_{q+1}-u_q\|_{\dot{B}^{\alpha}_{\infty,\infty}}=\|w_{q+1}\|_{\dot{B}^{\alpha}_{\infty,\infty}} \leq \left( 2^{q(\alpha)} \max_{0 \leq n\leq q(\alpha)} \|w_n\|_{\dot{B}^{\alpha}_{\infty,\infty}}\right)2^{-q+1}.
    \end{equation}
    We note the constant in \eqref{eq_itm4_1} depends solely on $\alpha$.
    \item $q+1 > q(\alpha)$ then using Lemma \ref{lem:Bernstein} we have 
    \begin{equation}\label{eq_itm4_2}
        \| u_{q+1}-u_q\|_{\dot{B}^{\alpha}_{\infty,\infty}}=\|w_{q+1}\|_{\dot{B}^{\alpha}_{\infty,\infty}}\lesssim \lambda_{q+1}^\alpha \|w_{q+1}\|_{L^\infty}\lesssim\lambda_{q+1}^{\alpha+\frac{1-\epsilon_{q+1}}{2}}\leq 2^{-C''(\alpha)q}
    \end{equation}
    for $C''(\alpha)=-\frac{\alpha}{2}$ and $\lambda_{q+1}$ large enough.
\end{itemize}

From ~\eqref{eq_itm4}, ~\eqref{eq_itm4_1} and ~\eqref{eq_itm4_2} we obtain
\begin{equation*}
    \| u_{q+1}-u_q\|_{\dot{B}^{\alpha}_{\infty,\infty}}+\| u_{q+1}-u_{q}\|_{L^p} \leq 2^{-C'(p)q}+\left( 2^{q(\alpha)+1} \max_{0 \leq n\leq q(\alpha)} \|w_n\|_{\dot{B}^{\alpha}_{\infty,\infty}}\right)2^{-q}+2^{-C''(\alpha)q}
\end{equation*}
hence the result with a suitable choice of constant $C(\alpha,p)$.

\subsection{Proof of Item \ref{i:4}}
We need to show that
\begin{equation}\label{eq:wq+1_L2_lowr_bd}
    \Vert w_{q+1} \Vert_{L^2} \gtrsim 1.
\end{equation}
From ~\eqref{eq:ak} and \ref{i:1} we see that
$$
0 < \left(\Vert E_q \Vert_{L^\infty}\right)^{1/2} \leq |a_q| \leq \left(3\Vert E_q \Vert_{L^\infty}\right)^{1/2}
$$
and so
\begin{equation}\label{eq:aq_sim_1}
    |a_q| \simeq 1 \quad \text{and} \quad \Vert a_q \Vert_{L^\infty} \simeq 1.
\end{equation}
Now using the $L^2$ normalization of $\rho_{q+1}$ and ~\eqref{eq:aq_sim_1} we have that
\begin{equation}\label{eq:lwr_bd_arho}
    1 = \Vert \rho_{q+1} \Vert_{L^2} \leq \Vert a_q \rho_{q+1} \Vert_{L^2} \Vert a_q \Vert_{L^\infty}^{-1} \simeq \Vert a_q \rho_{q+1} \Vert_{L^2}.
\end{equation}
Now clearly we have
$$
\Vert a_q \rho_{q+1} \Vert_{L^2} \leq \left\Vert \mathbb{P}_{\leq \lambda_{q+1}^2}(a_q \rho_{q+1})\right\Vert_{L^2} + \left\Vert \mathbb{P}_{\geq \lambda_{q+1}^2}(a_q \rho_{q+1})\right\Vert_{L^2}
$$
and from Lemma \ref{lem:L^inf_decay_2} we may choose $\lambda_{q+1}$ large enough to ensure that
\begin{equation}\label{eq:freq_cutoff_ineq}
    \left\Vert \mathbb{P}_{\geq \lambda_{q+1}^2}(a_q \rho_{q+1})\right\Vert_{L^2} < \left\Vert \mathbb{P}_{\leq \lambda_{q+1}^2}(a_q \rho_{q+1})\right\Vert_{L^2}.
\end{equation}
Combining ~\eqref{eq:lwr_bd_arho} and ~\eqref{eq:freq_cutoff_ineq} we see that
\begin{equation}\label{eq:lwr_bd_no_cos}
    1 \lesssim \left\Vert \mathbb{P}_{\leq \lambda_{q+1}^2}(a_q \rho_{q+1})\right\Vert_{L^2}.
\end{equation}
Now we compute
\begin{equation}\label{eq:L2_norm}
    2\left\Vert \mathbb{P}_{\leq \lambda_{q+1}^2}(a_q \rho_{q+1}) \cos(2\pi \sigma_{q+1}x) \right\Vert_{L^2}^2 = \left\Vert \mathbb{P}_{\leq \lambda_{q+1}^2}(a_q \rho_{q+1}) \right\Vert_{L^2}^2 + \int_{\T} \mathbb{P}_{\leq \lambda_{q+1}^2}(a_q \rho_{q+1})^2\, \cos(4\pi \sigma_{q+1}x).
\end{equation}
Now analyzing the final expression in ~\eqref{eq:L2_norm} we see using the Riemann-Lebesgue lemma that for $\lambda_{q+1}$ large enough we have that
\begin{equation}\label{eq:RL_lemma}
    \int_{\T} \mathbb{P}_{\leq \lambda_{q+1}^2}(a_q \rho_{q+1})^2\, \cos(4\pi \sigma_{q+1}x) > -\frac{1}{2}\left\Vert \mathbb{P}_{\leq \lambda_{q+1}^2}(a_q \rho_{q+1}) \right\Vert_{L^2}^2.
\end{equation}
Combining ~\eqref{eq:lwr_bd_no_cos}, ~\eqref{eq:L2_norm}, and ~\eqref{eq:RL_lemma} gives
$$
\left\Vert \mathbb{P}_{\leq \lambda_{q+1}^2}(a_q \rho_{q+1}) \cos(2\pi \sigma_{q+1}x) \right\Vert_{L^2} > \frac{1}{2} \left\Vert \mathbb{P}_{\leq \lambda_{q+1}^2}(a_q \rho_{q+1}) \right\Vert_{L^2} \gtrsim 1
$$
which in turn gives ~\eqref{eq:wq+1_L2_lowr_bd}.

\subsection{Proof of Item \ref{i:5}}
Recall that multiplication by $\cos(2\pi\sigma_{q+1}x)$ corresponds to a translation on the frequency side by $\pm\sigma_{q+1}$. Hence from Definition \ref{def:projs} and Definition \ref{def:wq+1} we see
\begin{equation*}
\operatorname{supp}\widehat{w}_{q+1}
\subset
B(\sigma_{q+1},2\lambda_{q+1}^2)
\cup
B(-\sigma_{q+1},2\lambda_{q+1}^2).
\end{equation*}
To ensure this support is contained in the portion of the smooth Littlewood-Paley frequency projection which is identity:
\begin{equation*}
\{\,2^j \le |\xi| \le \tfrac{12}{7}2^j\,\},
\end{equation*}
(recall again Definition \ref{def:projs}) it suffices (assuming $\lambda_{q+1}^3 = 2^j$ for large  $j\geq 0$) that
\begin{equation*}
2^j
\le
\frac{5}{4}\lambda_{q+1}^3-2\lambda_{q+1}^2 \quad \text{and} \quad
\frac{5}{4}\lambda_{q+1}^3+2\lambda_{q+1}^2
\le
\tfrac{12}{7}2^j.
\end{equation*}
Dividing by $\lambda_{q+1}^2$ and rearranging terms gives
\begin{equation}\label{eq:shell_cond}
2 \le \frac{1}{4}\lambda_{q+1} \quad \text{and} \quad
2 \le \frac{13}{28}\lambda_{q+1}.
\end{equation}
By choosing $\lambda_{q+1}$ sufficiently large ~\eqref{eq:shell_cond} holds and thus
\begin{equation}\label{eq:shell_identity}
    \mathbb{P}_{\lambda_{q+1}^3}(w_{q+1}) = w_{q+1}.
\end{equation}
~\eqref{eq:shell_identity} gives item \ref{i:5}.

\subsection{Proof of Item \ref{i:6}}
We start with induction hypothesis
\begin{equation}\label{eq:ind_hyp}
     \sum_{\substack{n,m \leq q\\ n \not  = m}} \Vert (u_n - u_{n-1}) (u_m - u_{m-1}) \Vert_{\dot{H}^{-s}} < C_1 - 2^{-q}.
\end{equation}
From Lemma \ref{lem:wq+1_est}, we may choose $\lambda_{q+1}$ large enough such that
\begin{equation}\label{eq:off_diag_est1}
    2\|w_{q+1}\|_{L^1} \sum_{n\leq q} \|u_n-u_{n+1}\|_{L^\infty} < 2^{-q-100}
\end{equation}
so then using ~\eqref{eq:ind_hyp} and ~\eqref{eq:off_diag_est1} we may write 
\begin{equation}\label{eq:off_diag_est2}
\begin{split}
    \sum_{\substack{n,m \leq q+1\\ n \not  = m}} \Vert (u_n - u_{n-1}) (u_m - u_{m-1}) \Vert_{\dot{H}^{-s}} &< \sum_{\substack{n,m \leq q\\ n \not  = m}} \Vert (u_n - u_{n-1}) (u_m - u_{m-1}) \Vert_{\dot{H}^{-s}}\\   
    &\qquad    + 2 \sum_{n\leq q } \|(u_n-u_{n-1})w_{q+1}\|_{\dot{H}^{-s}}\\
    &< C_1-2^{-q} +2\|w_{q+1}\|_{L^1} \sum_{n\leq q} \|u_n-u_{n+1}\|_{L^\infty} \\
    &<C_1-2^{-q} +2^{-q-100}\\
    &<  C_1-2^{-q-1}.
\end{split}    
\end{equation}
This gives the first portion of item \ref{i:6}. For the next part we again use inductive hypothesis
\begin{equation}\label{eq:ind_hyp2}
    \sum_{n \leq q} \Vert (u_n - u_{n-1})^2 \Vert_{\dot{H}^{-s}} < C_2 - 2^{-q + 100}
\end{equation}
as well as item \ref{i:2} and ~\eqref{eq:osc_error_est} to deduce
\begin{equation}
    \begin{split}
         \sum_{n \leq q+1} \Vert (u_n - u_{n-1})^2 \Vert_{\dot{H}^{-s}}
    &< C_2 - 2^{-q + 100} + \| w_{q+1}^2\|_{\dot{H}^{-s}}\\
    &= C_2 - 2^{-q + 100}+\frac{1}{3}||E_q + 3w_{q+1}^2 - E_q||_{\dot{H}^{-s}}\\
    &\leq C_2 - 2^{-q + 100}+\frac{1}{3}||E_q + 3w_{q+1}^2||_{\dot{H}^{-s}} + \frac{1}{3}||E_q||_{\dot{H}^{-s}}\\
    &< C_2 - 2^{-q + 100} + 2^{-q-101}+2^{-q-1}\\
    &< C_2 -2^{-q+99}.
    \end{split}
\end{equation}
This completes the proof of item \ref{i:6} and thus Proposition \ref{prop:ind}.

\noindent\textsc{Department of Mathematics, Purdue University, West Lafayette, IN, USA.}
\vspace{.03in}
\newline\noindent\textit{Email address}: \href{mailto:pathak30@purdue.edu}{pathak30@purdue.edu}.

\end{document}